\documentclass{imanumArXiv}

\usepackage{graphicx,wrapfig,lipsum}

\usepackage{chngcntr}
\usepackage{epsfig}
\usepackage{mathtools}
\theoremstyle{plain}
\newtheorem*{theorem*}{Theorem}
\usepackage[utf8]{inputenc}
\usepackage[english]{babel}
\usepackage[utf8]{inputenc}
\usepackage{epsfig}
\usepackage{caption}
\usepackage{graphicx}
\usepackage{subcaption}
\usepackage{xcolor}
\usepackage{mhchem}
\usepackage{chemformula}
\usepackage{siunitx}
\usepackage{authblk}
\usepackage[figurename=FIG.]{caption}
\usepackage[tablename=TABLE]{caption}
\usepackage[labelsep=space]{caption}
\captionsetup{labelsep=period}
\usepackage{natbib}
\usepackage{url}

\numberwithin{equation}{section}

\def\restrict#1{\raise-.5ex\hbox{\ensuremath|}_{#1}}

\begin{document}
 \title{\textbf{Augmented saddle point formulation of the steady-state Stefan--Maxwell diffusion problem}}
 \date{\today}
 
 \author{%
{\sc
Alexander Van-Brunt \thanks{Corresponding author. Email: alexander.van-brunt@maths.ox.ac.uk},
} \\
Mathematical Institute, University of Oxford, Oxford, OX2 6GG, UK and \\ The Faraday Institution, Harwell Campus,  Didcot, OX11 ORA, UK \\
{\sc Patrick E.~Farrell \thanks{Email: patrick.farrell@maths.ox.ac.uk}}\\
Mathematical Institute, University of Oxford, Oxford, OX2 6GG, UK \\
{\sc and}\\
{\sc Charles W.~Monroe \thanks{Email: charles.monroe@eng.ox.ac.uk}}\\
Department of Engineering Science, University of Oxford, Oxford OX1 3PJ, UK and \\ The Faraday Institution, Harwell Campus,  Didcot, OX11 ORA, UK
}

 \maketitle

\begin{abstract}
{We investigate structure-preserving finite element discretizations of the steady-state Stefan--Maxwell diffusion problem which governs diffusion within a phase consisting of multiple species. An approach inspired by augmented Lagrangian methods allows us to construct a symmetric positive definite augmented Onsager transport matrix, which in turn leads to an effective numerical algorithm. We prove inf-sup conditions for the continuous and discrete linearized systems and obtain error estimates for a phase consisting of an arbitrary number of species. The discretization preserves the thermodynamically fundamental Gibbs--Duhem equation to machine precision independent of mesh size. The results are illustrated with numerical examples, including an application to modelling the diffusion of oxygen, carbon dioxide, water vapour and nitrogen in the lungs.}
{Stefan--Maxwell equations, multicomponent diffusion, augmented saddle point formulation}
 \end{abstract}

\section{Introduction} 

 Molecular diffusion is a fundamental mode of mass transport. Within a stationary solution containing a dilute solute species of concentration $c$, the classical model for diffusion was formulated by \cite{Fick}, which postulates that the solute's molar flux $N$ obeys
\begin{equation} \label{Fickianansatz}
N = -D \nabla c,
\end{equation}
 in which $D>0$ is the solute's Fickian diffusivity in the solution. \cite{Maxwell1867} applied kinetic theory to derive Fick's law for binary ideal-gas diffusion, showing for isothermal gases that $D$ further relates to a composition-independent constant material property. \cite{Stefan1871} extended Maxwell's analysis to multicomponent gases, expressing the gradient of each species concentration in terms of a matrix of binary diffusivities. The resulting Stefan--Maxwell equations (also commonly called Maxwell--Stefan equations in the engineering and mathematical literature) have been verified experimentally for gas diffusion in studies such as \cite{Duncan1969} and \cite{1975Carty}.

Using his theory of irreversible thermodynamics, \cite{Onsager1931first, Onsager1931second, Onsager1945} 
provided a broader theoretical framework for mass transport that could also be applied to multicomponent diffusion in nonideal phases, including liquids and/or solids. \cite{hirschfelder1954molecular} substantiated this more abstract analysis, manipulating thermodynamic laws and hydrodynamic equations to construct the diffusion driving forces for general nonisobaric, nonisothermal, multicomponent diffusion systems. Combined with Lightfoot, Cussler and Rettig's observation that the Stefan--Maxwell diffusivities map invertibly into Onsager's transport matrix, and can therefore be used for condensed phases as well as gases \citep{Lightfoot1962}, this extended the Stefan--Maxwell theory to all molecular diffusion processes. \cite{Newman1965} brought the generalization further, accounting for materials containing charged solutes, thereby completing the development of the contemporary Stefan--Maxwell equations. 
Modern expositions of the theory can be found in \cite{KRISHNA1997861, Krishna1979} and \cite{ DATTA20105976}.

Given a bounded Lipschitz domain $\Omega \subset \mathbb{R}^{d}, \: d \in \{2,3 \}$, the Stefan--Maxwell equations describing the
diffusion of the $n$ species that constitute a common phase at a given absolute temperature $T > 0$ are given by
\begin{equation} \label{FundamentalMSeqn}
d_{i} = \sum_{\substack{j=1 \\ i\neq j}}^{n} \frac{RT c_{i} c_{j}}{ \mathcal{D}_{ij} c_{\text{T}}} (v_{i} - v_{j})
\end{equation}
for $i = 1,2,\dots, n$, in which $R > 0$ is the ideal gas constant. 
The terms $c_{i}:  \Omega \rightarrow \mathbb{R}^{+}$ and $v_{i}: \Omega \rightarrow \mathbb{R}^{d}$ denote the concentration and velocity of species $i= 1,2,\dots,n$ respectively, 
related to the molar flux of the $i^{\mathrm{th}}$ species by $N_{i} = c_{i} v_{i}$. For species $i \neq j$, $\mathcal{D}_{ij} \in \mathbb{R}$ represents the Stefan--Maxwell diffusivity of species $i$ through species $j$; these material parameters are symmetric in the species indices, $\mathcal{D}_{ij} = \mathcal{D}_{ji}$, and coefficients $\mathcal{D}_{ii}$ are not defined. 
The term $c_\text{T}$ in equation \eqref{FundamentalMSeqn} denotes the total concentration, defined as
\begin{equation} \label{Totalconcentrationdef}
c_{\text{T}}: = \sum_{i=1}^{n} c_{i},
\end{equation}
and the terms $d_{i}: \Omega \rightarrow \mathbb{R}^{d}$ represent the diffusion driving forces, which generally depend on the species concentrations, temperature and pressure.  In the case of isothermal, isobaric ideal-gas diffusion considered here, $d_{i} = - RT \nabla c_{i}$. Furthermore, an ideal gas satisfies the equation of state 
\begin{equation} \label{Idealgaslaw}
p = c_{\text{T}} RT,
\end{equation}
in which $p$ is the pressure. Hence in the  isothermal, isobaric setting, $c_{\text{T}}$ is a constant.  To pose the Stefan--Maxwell convection-diffusion problem,  flux constitutive laws \eqref{FundamentalMSeqn} and the equation of state \eqref{Idealgaslaw} are coupled to the continuity equations
\begin{equation} \label{Fundamentalcontinuityeqn}
\frac{\partial c_{i}}{\partial t} = -\nabla \cdot \big (c_{i} v_{i} \big)  + r_{i},
\end{equation}
where $r_{i}: \Omega \rightarrow \mathbb{R}$ is a specified volumetric reaction rate, which quantifies the generation or depletion of  species $i$ by homogeneous chemical reactions. 
Under the ideal-gas assumption considered in this paper, we are interested in solving \eqref{FundamentalMSeqn} (with $d_{i} = -RT \nabla c_{i}$) and \eqref{Fundamentalcontinuityeqn} for the species concentrations $c_{i}$ and their respective velocities $v_{i}$.

The Stefan--Maxwell equations have found a diverse range of applications in areas such as biology, electrochemistry, and plasma physics. For specific examples, we refer the reader to the studies by \cite{Boudin2010, Bruce1988, ABDULLAH20075821, newman2012electrochemical, LIU2014447} and \cite{Maxwell--Stefan-plasma}. Because they account for solute/solute interactions as well as solute/solvent interactions, Stefan--Maxwell models can exhibit fundamentally different behaviour from Fickian models. For example, in \cite{Duncan1969} `uphill diffusion' is observed, wherein 
the directions of a species' molar flux and its concentration gradient coincide, in contradiction to \eqref{Fickianansatz}. In electrochemistry, the Stefan--Maxwell formalism justifies surprising observations like negative transference, where the flow of an electric current with one sign carries ions of opposing sign along with it \citep{Monroe2013}.

Under restrictive assumptions, a multicomponent extension of Fick's law known as \emph{dilute solution theory} can be recovered. For a dilute set of species $c_{k}, \: k = 2,\dots,n$ in the presence of a solvent in far greater proportions, $c_{1}$, one can formally neglect the terms $c_{i}c_{j}$ in  \eqref{FundamentalMSeqn} whenever both $i,j \geq2$ and take $c_\textrm{T} \approx c_1$, allowing rearrangement to express the molar fluxes as
\begin{equation} \label{diluteNs}
N_{i} = \frac{\mathcal{D}_{i1}}{RT} d_i + c_{i} v_1.
\end{equation}
 For dilute solutes the driving forces often take the form $d_i = - \chi_i RT \nabla c_i$, where $\chi_{i}$ is known as a Darken factor \citep{Darken48}, in which case one can identify  $D_{i} = \mathcal{D}_{i1} \chi_i$ as the Fickian diffusivity of species $i$ in the solution. Writing equations \eqref{diluteNs} for all solutes and replacing $v_1$ with the barycentric velocity $v$ produces the dilute solution theory. We direct the reader to \cite{newman2012electrochemical} for further details. When the solute driving forces within dilute solution theory are written in terms of both concentration gradients and the electric field, \eqref{diluteNs} is referred to as a Nernst--Planck relationship, based on the work by \cite{Nernst1888} and \cite{Planck1890}. Nernst--Planck equations have been extensively studied in the mathematical literature, sometimes coupled with Poisson's equation to account for the distribution of the electric potential, and a Navier--Stokes equation or a Darcy flow to compute the velocity $v$. We refer the reader to \cite{2009Schmuck, EUDNPPequation, Jinchao2015} and \cite{Jinchao2018} and the references therein for the existing mathematical literature on the Nernst--Planck equation. In many cases dilute solution theory is not appropriate, and the full Stefan--Maxwell equations must be considered. A comparison of Fickian to Stefan--Maxwell diffusion profiles for gases can be found in \cite{KRISHNA1997861} and \cite{Boudin2010}. Examples of the limitations of dilute-solution models are discussed in the context of lung modelling, earth science and electrolyte transport in the studies by \cite{CHANG1975109, Arthur1990} and \cite{MonroeNernstPlanck2016} respectively.

\subsection{Physical structure and consequences}

The mathematical structure of irreversible thermodynamics will prove useful below for devising discretizations and error estimates for the Stefan--Maxwell diffusion problem. We therefore summarize some key points of the theory. We begin with the transport equations postulated by \cite{Onsager1945} for isotropic  materials,
\begin{equation} \label{TransportmatrixMS}
d_{i} = \sum_{j=1}^{n}\textbf{M}_{ij} v_{j},
\end{equation}
in which the statistical reciprocal relations developed in \cite{Onsager1931first, Onsager1931second} require the transport matrix $\textbf{M}: \Omega \rightarrow \mathbb{R}^{n \times n}$ to be real symmetric. 

The Onsager transport equations \eqref{TransportmatrixMS} were developed independently of the Stefan--Maxwell theory \eqref{FundamentalMSeqn}. It was subsequently realized by \cite{Lightfoot1962} that the Stefan--Maxwell equations could be understood in terms of Onsager's transport matrix by identifying 
\begin{equation} \label{transport matrix}
\textbf{M}_{ij} =\textbf{M}_{ij}(c_{i},c_{j}, c_\text{T}) = \begin{cases} -\frac{RT c_{i} c_{j}}{\mathcal{D}_{ij} c_{\text{T}}}  \:\:\: \text{if} \:\: i \neq j \\
\sum_{k \neq i}^{n}\frac{RT c_{i} c_{k}}{\mathcal{D}_{ik} c_{\text{T}}} \:\:\: \text{if} \:\: i = j \end{cases}
\end{equation}
 as the entries of $\textbf{M}$.

 Time evolution of nonequilibrium states leads to local entropy production, denoted by $\dot{S}$. For an isothermal, isobaric system with a given collection of species velocities $(v_{1},v_{2},\dots,v_{n})$ experiencing $(d_{1},d_{2},\dots,d_{n})$ nonequilibrium diffusion driving forces, the balances of material, momentum, and heat are manipulated in  \cite{hirschfelder1954molecular} and \cite{Monroe2009} to write the entropy production of isothermal diffusion as 
\begin{equation} \label{Entropyproduction}
\dot{S} = \frac{1}{T} \sum_{i=1}^{n} d_{i} \cdot v_{i},
\end{equation}
 in which the $d_i$ are identified as functions of the gradients of temperature, pressure, and $(c_1, c_2,...,c_n)$ by grouping terms in the Gibbs--Duhem equation from equilibrium thermodynamics \citep{deGrootMazur, hirschfelder1954molecular, Goyal01012017}. In general there may be other terms in \eqref{Entropyproduction} such as viscous dissipation or reaction entropy, but these will be neglected here. 

 The second law of thermodynamics further demands that the energy dissipation $T \dot{S}$ is non-negative, $T\dot{S} \geq 0$, with equality only in an equilibrium state, which is defined by the condition that $d_i = 0$ for all $i$. Thermodynamic stability therefore requires further that $\textbf{M}$ be positive semidefinite.

Some additional structure of the transport matrix is specific to multicomponent mass diffusion. Importantly, the theory must guarantee that diffusional motion, driven by thermodynamic property gradients, remains distinct from species convection, a non-dissipative process driven by bulk flow. This distinction is made by requiring that \eqref{TransportmatrixMS} be invariant to a shift of every species velocity by a vector field $\bar{v}: \Omega \rightarrow \mathbb{R}^d$, i.e.~the equation remains unchanged when each $v_{i}$ in \eqref{TransportmatrixMS} is replaced by $(v_{i} - \bar{v})$. The essential physical distinction between diffusion and convection consequently requires that 
\begin{equation} \label{Nullspace}
\sum_{j=1}^{n} \textbf{M}_{ij} = 0,
\end{equation}
as noted by \cite{Onsager1945} and \cite{Helfand1960}. Hence $\textbf{M}$ has a null eigenvalue corresponding to the eigenvector $(1,1,\dots,1)^{\top} \in \mathbb{R}^{n}$. Invariance with respect to the convective velocity is naturally embedded in the Stefan--Maxwell form \eqref{FundamentalMSeqn}, because $v_{i}-v_{j}=(v_{i} - \bar{v})-(v_{j} -\bar{v})$. 

The symmetry of $\textbf{M}$ suggested by \cite{Onsager1945} requires that $\mathcal{D}_{ij} = \mathcal{D}_{ji}$, a fact that has also been demonstrated directly for Stefan--Maxwell diffusion by fluctuation theory in \cite{Monroe2015}. The symmetry of the transport matrix combined with the nullspace \eqref{Nullspace} allows recovery of the full \text{Gibbs--Duhem equation}, namely
\begin{equation} \label{FundamentalGibbsD}
\sum_{i=1}^{n} d_{i} = 0.
\end{equation}
In the context of transport theory, equation \eqref{FundamentalGibbsD} can be seen as a statement of Newton's third law of motion, that action equals reaction. In thermodynamics this is necessary to be consistent with the first law of thermodynamics and the extensivity of the Gibbs free energy.

Reasoning physically that all diffusion processes are necessarily dissipative, \cite{Onsager1945} makes the stronger assumption that $\textbf{M}$ has exactly one null eigenvalue.  
Taken together, the physical arguments require that $\textbf{M}$ is symmetric positive semidefinite, and that its eigenvalues, $\{ \lambda_{i=1}^{\textbf{M}} \}_{i=1}^{n}$, may be ordered as
\begin{equation} \label{SpectrumofM}
0 = \lambda^{\textbf{M}}_{1} < \lambda^{\textbf{M}}_{2} \leq \dots \leq \lambda_{n}^{\textbf{M}},
\end{equation}
a spectral structure that will be used throughout this paper.

Combining \eqref{TransportmatrixMS} and \eqref{Entropyproduction} implies that
\begin{equation} \label{PSD}
T \dot{S} = \sum_{i,j=1}^{n}  v_{i} \cdot \textbf{M}_{ij} v_{j} =\frac{1}{2}\sum_{i=1}^{n} \sum_{j \neq i}^{n} \frac{c_{i} c_{j} R T}{\mathcal{D}_{ij} c_{\text{T}}} (v_{j} - v_{i})^{2}
  \geq 0.
\end{equation}
At positive concentrations, energy dissipation $T \dot{S} > 0$ occurs whenever there is relative species motion, implying that the equality in \eqref{PSD} occurs if and only if $v_1 = v_2 = \cdots = v_n$. 

One must take care to note that $\textbf{M}$ may afford additional nullspaces beyond \eqref{Nullspace} if any concentration vanishes. Consequently, in order to phrase the Stefan--Maxwell equations in terms of Onsager's transport laws \eqref{TransportmatrixMS} with a transport matrix $\textbf{M}$ that possesses the spectral structure \eqref{SpectrumofM}, it will be necessary to assume that $c_{i} >0$ almost everywhere for each $i = 1,2,\dots n$. We make this assumption henceforth.

Because the present discussion is limited to ideal-gas mixtures, 
it can be assumed that the Stefan--Maxwell diffusion coefficients $\mathcal{D}_{ij}$ are given constants, which places even stronger restrictions on their values.   Whenever the concentrations satisfy $c_{i} \geq \kappa >0$ for each $i=1,2,\dots,n$ and any positive constant $\kappa$, then $\lambda_{\kappa} \leq \lambda_{2}^{\textbf{M}}$ for a positive constant $\lambda_{\kappa}$ which depends only on $\kappa$, a fact that will be used throughout the paper. From the calculation \eqref{PSD}, it follows that a necessary and sufficient condition for \eqref{SpectrumofM} to be true for all positive concentrations is that each $\mathcal{D}_{ij}$ is strictly positive \citep{Krishna1979}. It must be stressed, however, that the Stefan--Maxwell diffusion coefficients in many physical systems depend strongly on the concentrations of the species, in which case negative Stefan--Maxwell diffusion coefficients  are not only possible, but are observed and of practical interest \citep{Gerrit1993,NegativeMScoefficients}. Therefore in order to present a general framework for multispecies diffusion, the results in this paper only use the spectral structure \eqref{SpectrumofM}, not the positivity of the Stefan--Maxwell diffusion coefficients.

In systems with more than one spatial dimension, the existence of the nullspace \eqref{Nullspace} means that the problem \eqref{FundamentalMSeqn}, \eqref{Totalconcentrationdef}, \eqref{Fundamentalcontinuityeqn} will not be well-posed unless a choice of convective velocity is made (see Remark \ref{1Dcase} below for the one-dimensional case). This can be done by specifying that the mass-flux must equal given data $u: \Omega \to \mathbb{R}^d$:
\begin{equation} \label{mass-flux constraint}
 u = \sum_{j=1}^{n} M_{j} c_{j}  v_{j},
\end{equation}
where $M_i > 0$ is the molar mass of species $i$. 
In general, the mass-flux must also be solved for via the Cauchy momentum equation, which in the absence of a 
pressure gradient or an external force field, can be written in conservation form as
\begin{equation} \label{FundamentalCauchy}
\frac{\partial u}{\partial t} = - \nabla \cdot \Big (\rho^{-1} u \otimes u - \sigma \Big ),
\end{equation}
where the density $\rho$ is defined as
\begin{equation} \label{definition of density}
\rho : = \sum_{j=1}^{n} M_{j} c_{j},
\end{equation}
and $\sigma$ denotes the deformation stress tensor appropriate for the medium. We refer to the problem of solving \eqref{FundamentalMSeqn}, \eqref{Fundamentalcontinuityeqn}, \eqref{mass-flux constraint} and \eqref{FundamentalCauchy} as the Stefan--Maxwell convection-diffusion problem. In this work we assume
that $u$ is given and focus on the solution of \eqref{FundamentalMSeqn}, \eqref{Fundamentalcontinuityeqn} and \eqref{mass-flux constraint} under an additional steady-state assumption, which we call the steady-state Stefan--Maxwell diffusion problem.

\subsection{Premise and main results}

The central idea of this manuscript is to incorporate the constraint \eqref{mass-flux constraint} by augmenting \eqref{FundamentalMSeqn}, in
a manner inspired by the augmented Lagrangian approach \citep{Bochev2006,Fortin1983}. 
Given $\gamma >0$, for each $i$ we multiply both sides of \eqref{mass-flux constraint} by $\gamma RT M_{i} c_{i} / \rho$ and add the resulting term to the $i^{\mathrm{th}}$ equation of \eqref{FundamentalMSeqn} to deduce that
\begin{equation} \label{OurMSeqn}
d_{i} + \frac{\gamma RT M_{i} c_i}{\rho} u  =  \sum_{j \neq i}^{n} \frac{RTc_{i} c_{j}}{\mathcal{D}_{ij} c_{\text{T}}} \big ( v_{i} -  v_{j} \big ) + \frac{\gamma RT M_{i} c_i}{\rho}  \sum_{j=1}^{n}M_{j} c_{j}  v_{j} =  \sum_{j=1}^{n} \textbf{M}^{\gamma}_{ij} v_{j}
\end{equation}
for $i = 1,2,\dots, n$, where $\textbf{M}_{ij}^{\gamma}$ is the augmented transport matrix
\begin{equation} \label{augmented transport matrix}
\textbf{M}^{\gamma}_{ij} =\textbf{M}_{ij} + \gamma \mathcal{L}_{ij},
\end{equation}
in which
\begin{equation} \label{definition of L}
\mathcal{L}_{ij}: =  RT M_{i} M_{j} c_{i} c_{j}/ \rho.
\end{equation}
Our particular choice of the entries of $\mathcal{L}$ allows us to compute
\begin{equation} \label{Coerciveness}
\sum_{i,j=1}^{n} v_{i} \cdot \textbf{M}^{\gamma}_{ij} v_{j},= \frac{1}{2}\sum_{i=1}^{n} \sum_{j \neq i}^{n} \frac{c_{i} c_{j} R T}{\mathcal{D}_{ij}c_{\text{T}}} (v_{j} - v_{i})^{2} + \gamma \Big ( \sum_{j=1}^{n} M_{j} c_{j}  v_{j} \Big )^{2}
\end{equation}
to show that the augmented transport matrix is symmetric positive definite. The positive-definiteness achieved by this augmentation will cause the associated bilinear forms in the variational formulation to follow to be coercive, greatly facilitating the analysis. 

The paper is organized as follows. Section $2$ provides an overview of the existing numerical literature on the Stefan--Maxwell equations and contrasts our approach with previous efforts. In section $3$ we derive a suitable weak formulation for the problem and prove well-posedness of a linearized system of \eqref{FundamentalMSeqn}-\eqref{Fundamentalcontinuityeqn} in section 4. In section $5$ we show stability of a discretization of this linearized system and prove error estimates for the linearization. Finally, in section $6$ we verify our error estimates with a manufactured solution and illustrate our method by simulating the interdiffusion of oxygen, carbon dioxide, water vapour and nitrogen in the lungs.
 
\section{Existing numerical literature}

Despite their wide applicability, the Stefan--Maxwell equations have received relatively little attention from numerical analysts. In nearly all existing work, the equations are formulated in terms of the molar flux $N_{i}=c_{i} v_{i}$. The interdependence among the collection of driving forces implied by Gibbs--Duhem relation \eqref{FundamentalGibbsD} allows the equation for $d_n$ to be discarded. The mass-flux constraint \eqref{mass-flux constraint} is then used to eliminate the $n^\mathrm{th}$ species velocity from the system. Following this process, a non-singular matrix $\textbf{A}$ is derived which satisfies
\begin{equation}
d_{i} = \sum_{j=1}^{n-1} \textbf{A}_{ij} c_{j} v_{j} = \sum_{j=1}^{n-1} \textbf{A}_{ij} N_{j}.
\end{equation}
One can then proceed to solve for the molar fluxes in terms of the driving forces $d_{i}$ by inverting $\textbf{A}$. If, for example, we have $d_{i} = -RT\nabla c_{i}$, the inverted, truncated flux laws can be substituted into the continuity equations  \eqref{Fundamentalcontinuityeqn} for species $i = 1, ..., n-1$ to yield
\begin{equation}
\frac{\partial c_{i}}{ \partial t} = - \nabla \cdot  \Big ( \sum_{j=1}^{n-1} \textbf{A}^{-1}_{ij} \nabla c_{j} \Big ).
\end{equation}
 Thus one obtains evolution equations for the concentrations, having eliminated the molar fluxes completely. Papers which take this approach and analyse the resulting equations to determine some existence and uniqueness properties include \cite{2012Boudin, Bothe2010, Jungel2012} and \cite{Jungel2019}. \cite{2012Boudin} and \cite{Jungel2019} also analyse numerical schemes along these lines. It is worth remarking that the matrix $\textbf{A}^{-1}$ is not positive symmetric definite, although, at least in certain circumstances, one can define `entropy variables' so that the resulting system is symmetric positive definite, as carried out by \cite{Jungel2019}.

The approach of \cite{McLeod2014} does not eliminate molar fluxes, but rather solves for them in a mixed saddle point formulation. They then prove well-posedness of a linearized system consisting of three species, under some constraints on the Stefan--Maxwell diffusion coefficients. A discretization using mixed finite elements is then presented and error bounds on the linearized system are obtained. Our paper is similar in scope, but with several key differences and extensions.

First, our approach does not need any rearrangement of \eqref{mass-flux constraint} to eliminate one species, but rather incorporates the constraint via the augmented formulation \eqref{OurMSeqn}. The choice of species to eliminate is somewhat arbitrary, and with the augmentation is no longer necessary. Augmentation also exploits the symmetric positive semidefinite structure of the transport matrix and preserves permutational symmetry of the system. This will be particularly pertinent for anticipated future work where we intend to have more complex driving forces of the form
\begin{equation}
d_{i} = - c_{i} \nabla \mu_{i} + \frac{c_{i} M_{i}}{\rho} \nabla p,
\end{equation}
where $\mu_{i}$ is the electrochemical potential of species $i$ and $p$ is the pressure. These more complex driving forces render rearrangement increasingly intractable.

Second, the symmetric positive definite structure of the augmented transport matrix yields straightforward proofs of the coercivity of bilinear forms on appropriate function spaces. As a consequence, we will prove that the linearized system is well-posed in the continuous and discrete setting and derive error bounds for its discretization in the general case of $n$ species. The methodology presented in this paper also encompasses the case where individual Stefan--Maxwell diffusion coefficients may be negative.

Finally, we are able to design the discrete formulation in a structure-preserving way so that the Gibbs--Duhem equation \eqref{FundamentalGibbsD} is satisfied up to machine precision, independent of mesh size. Previous works instead assume the Gibbs--Duhem equation and use it to infer the concentration of the $n^\mathrm{th}$ species in a postprocessing step.

\section{Problem formulation}

We proceed to cast the problem into variational form. Note that both sides of equation \eqref{OurMSeqn} are proportional to $RT$ and hence without loss of generality we assume that $RT=1$. Our idealized assumption on the driving forces then becomes
\begin{equation}
d_{i}  := -\nabla c_{i}, \quad i = 1,2,\dots n.
\end{equation}
In this case the Gibbs--Duhem equation \eqref{FundamentalGibbsD} reduces to
\begin{equation}
\nabla c_{\text{T}} = 0, 
\end{equation}
 i.e.~that total concentration is constant. This is also important as the constancy of $c_{\text{T}}$ is required to be consistent with the equation of state \eqref{Idealgaslaw}, which is distinct from the Gibbs--Duhem equation.  
 We assume that $u \in H^{1}(\Omega)^{d}$ and consider the boundary conditions
\begin{align} \label{Boundarydata1}
& N_{i} \cdot \textbf{n} = c_{i} v_{i} \cdot \textbf{n}= g_{i} \: \in H^{-1/2}(\Gamma_{N}) \:\:\: \text{on} \:\:\:  \Gamma_{N}, \quad i = 1,2, \dots, n,\\ \label{Boundarydata2}
& c_{i} = f_{i} >0 \: \in  H^{1/2}(\Gamma_{D})  \:\:\: \text{on} \:\:\:  \Gamma_{D}, \quad i = 1,2, \dots, n,
\end{align}
where $\textbf{n}$ is the outward facing unit normal vector and $\Gamma_{N}, \Gamma_{D}$  partition $\partial \Omega$. The equalities in \eqref{Boundarydata1}-\eqref{Boundarydata2} are to be understood in the sense of traces \citep{evans2010partial}. It is necessary to assume that $f_{i}$ is positive for each $i=1,2, \dots, n$ to avoid $\textbf{M}$ acquiring another nullspace at the boundary. Either one of $\Gamma_{N}$ and $\Gamma_{D}$ may be empty. This boundary data is assumed to satisfy
\begin{align} \label{NeumanBC consistency}
\sum_{i=1}^{n}  g_{i}M_{i}  &=  u \cdot \textbf{n} \:\:\: \text{on} \:\:\:  \Gamma_{N}, \\
\label{DirichletBC consistency}
\sum_{i=1}^{n}  f_{i}  &= C_{\text{T}} \:\:\: \text{on} \:\:\:  \Gamma_{D},
\end{align}
where $C_{\text{T}} >0$ is a constant that we will show is equal to the total concentration \eqref{Totalconcentrationdef}. These assumptions are necessary to be consistent with the Gibbs--Duhem equation \eqref{FundamentalGibbsD} and the mass-flux constraint \eqref{mass-flux constraint}. 
Under the steady-state assumption, the species continuity equations \eqref{Fundamentalcontinuityeqn} become
\begin{equation} \label{Steadystateconteq}
\nabla \cdot (c_{i} v_{i} ) = r_{i}.
\end{equation}
Therefore, we demand that the reaction rates, $r_{i} \in L^{2}(\Omega)$, satisfy
\begin{equation} \label{Reaction consistency 1}
 \sum_{i=1}^{n} r_{i} M_{i} =  \nabla \cdot u  \text{ in } \Omega 
\end{equation}
to ensure consistency of \eqref{Steadystateconteq} with \eqref{mass-flux constraint}.

We define the function space
\begin{equation}
H_{\Gamma_{D}}^{1}(\Omega) = \{ w_{i} \in H^{1}(\Omega) \: : \:  w_{i}\restrict{ \Gamma_{D}} = 0 \},
\end{equation}
and the affine function space
\begin{equation}
H_{f_{i}}^{1}(\Omega) = \{ w_{i} \in H^{1}(\Omega) \: : \: w_{i} \restrict{ \Gamma_{D}} = f_{i} \}.
\end{equation}

We can now derive the weak formulation. 
We test \eqref{OurMSeqn} with $\tau_{i} \in L^{2}(\Omega)^{d} $ and integrate over $\Omega$ to derive for all $i = 1,2, \dots, n$,
\begin{equation} \label{WeakformOurMSeqn}
\int_{\Omega} \Big ( - \nabla c_{i} +\frac{\gamma M_{i}c_{i} }{\rho} u \Big ) \cdot \tau_{i}=  \int_{\Omega} \Big ( \sum_{j \neq i}^{n}\frac{c_{i} c_{j}}{\mathcal{D}_{ij} c_{\text{T}}} \big ( v_{i} -  v_{j} \big ) + \frac{\gamma M_{i} c_{i}}{\rho} \sum_{j=1}^{n} M_{j} c_{j} v_{j} \Big ) \cdot \tau_{i},
\end{equation}
for all $\tau_{i} \in L^{2}(\Omega)^{d}$.

 For a given $w_{i} \in H^{1}_{\Gamma_{D}}(\Omega)$ we multiply both sides of \eqref{Steadystateconteq} by $-w_{i}$ and integrate by parts to yield that for all $i = 1,2, \dots, n$,
\begin{equation} \label{weakcontinuity}
\int_{\Omega} c_{i} v_{i} \cdot \nabla w_{i} - \int_{\Gamma_{N}} g_{i} w_{i}  = -\int_{\Omega} r_{i} w_{i},
\end{equation}
for all $w_{i} \in H^{1}_{\Gamma_{D}}(\Omega)$. We therefore seek $v_{i} \in L^{2}(\Omega)^{d}$ and $c_{i} \in H^{1}_{f_{i}}(\Omega)$ such that \eqref{WeakformOurMSeqn} and \eqref{weakcontinuity} hold for every $\tau_{i} \in L^{2}(Q)^{d}$ and $w_{i} \in H^{1}_{\Gamma_{D}}(\Omega)$, for each $i=1,2,\dots,n$. 

\begin{remark} \label{1Dcase}
In the case of one dimension, \eqref{Steadystateconteq} and the boundary data \eqref{Boundarydata1}-\eqref{Boundarydata2} allow us to recover $c_{i}v_{i}$ completely. Consequently no augmentation is necessary.
\end{remark}

We will now show that such a weak solution satisfies both the Gibbs--Duhem equation \eqref{FundamentalGibbsD} and the mass-flux constraint \eqref{mass-flux constraint}. 
Choosing $\tau_{i}= \tau \in L^{2}(\Omega)^{d}$ for every $i=1,2,\dots,n$ and summing over $i$ in \eqref{WeakformOurMSeqn} yields
\begin{equation}
\sum_{i=1}^{n} \int_{\Omega} \Big ( - \nabla c_{i} +\frac{\gamma M_{i}c_{i} }{\rho} u \Big ) \cdot \tau  = \sum_{i=1}^{n} \int_{\Omega} \Big ( \sum_{j \neq i}^{n}\frac{c_{i} c_{j}}{\mathcal{D}_{ij} c_{\text{T}}} \big ( v_{i} -  v_{j} \big ) + \frac{\gamma M_{i} c_{i}}{\rho} \sum_{j=1}^{n} M_{j} c_{j} v_{j} \Big ) \cdot \tau.
\end{equation}
However we can use the nullspace \eqref{Nullspace} and symmetry of $\textbf{M}$ to deduce 
\begin{equation}
\sum_{i=1}^{n}  \sum_{j \neq i}^{n} \frac{c_{i} c_{j}}{\mathcal{D}_{ij} c_{\text{T}}} \big ( v_{i} -  v_{j} \big ) = \sum_{i,j=1}^{n} \textbf{M}_{ij} v_{j} =0,
\end{equation}
and by the definition of the density \eqref{definition of density}, we obtain that
\begin{equation} \label{weakNotquiteGD}
 \sum_{i=1}^{n} \int_{\Omega} \gamma M_{i} c_{i} v_{i} \cdot \tau - \int_{\Omega} \gamma u  \cdot \tau +\int_{\Omega}  \nabla c_{\text{T}} \cdot \tau  = 0,
\end{equation}
for all $\tau \in L^{2}(\Omega)^d$. Considering the first and second terms with the choice $\tau = \nabla w$ for some $w  \in H^{1}_{\Gamma_{D}}(\Omega)$, and using \eqref{weakcontinuity},
\begin{align} \label{weakNotquiteGD2}
  \sum_{i=1}^{n}  \int_{\Omega} \gamma M_{i} c_{i} v_{i} \cdot \nabla w- \int_{\Omega} \gamma u  \cdot \nabla w &= \sum_{i=1}^{n} \gamma \Big (-\int_{\Omega}  M_{i} r_{i} w + \int_{\Gamma_{N}} M_{i} g_{i} w \Big )-\int_{\Omega} \gamma u \cdot \nabla w \\
   &= -\int_{\Omega}    \gamma w \nabla \cdot u+ \int_{\Gamma_{N}} \gamma w u \cdot \textbf{n}-\int_{\Omega} \gamma u \cdot \nabla w \quad \text{ (by \eqref{NeumanBC consistency} and \eqref{Reaction consistency 1})}\nonumber\\
   &= 0, \nonumber
\end{align}
the final equality following from  integration by parts. In light of this, \eqref{weakNotquiteGD} becomes
\begin{equation} \label{WeakenforcingofGD}
\int_{\Omega}  \nabla c_{\text{T}} \cdot \nabla w = 0,
\end{equation}
for every $w \in H^{1}_{\Gamma_{D}}(\Omega)$.
In particular, as $c_{\text{T}}$ is constant on $\Gamma_{D}$ by \eqref{DirichletBC consistency},  there exists a $w \in H^{1}_{\Gamma_{D}}(\Omega)$ such that $\nabla w = \nabla c_{\text{T}}$. For this choice of $w$, \eqref{WeakenforcingofGD} becomes
\begin{equation} \label{GDenforced}
\int_{\Omega} \big | \nabla c_{\text{T}} \big |^{2} = 0.
\end{equation}
Hence $\nabla c_{\text{T}}=0$ almost everywhere, which is the Gibbs--Duhem equation \eqref{FundamentalGibbsD}. The relationship \eqref{DirichletBC consistency} ensures that $c_{\text{T}} = C_{\text{T}}$. Equation \eqref{weakNotquiteGD} then simplifies to
\begin{equation}
\int_{\Omega} \Big ( \sum_{i=1}^{n} M_{i}   c_{i}  v_{i} \Big )\cdot \tau=\int_{\Omega} u \cdot \tau  \:\:\:\:\:\:\:\: \forall \:\: \tau \in L^{2}(\Omega)^{d},
\end{equation}
a variational statement of the mass-flux constraint \eqref{mass-flux constraint}.

\begin{remark} \label{Nonlinearnullspace}
With pure Neumann boundary data ($\Gamma_D = \emptyset$), the system \eqref{WeakformOurMSeqn}-\eqref{weakcontinuity} is not well posed. Observe that if $c_{i} $ and $v_{i}$ solve equations \eqref{WeakformOurMSeqn} and \eqref{weakcontinuity} then so do the variables $\hat{c}_{i} = \alpha c_{i}$ and $\hat{v}_{i} = \alpha^{-1} v_{i}$ for any $\alpha >0$.  In order to make the problem well posed it is necessary to impose auxiliary conditions such as
\begin{equation} \label{Making Neumann problem well posed.}
\int_{\Omega} c_{i} = \bar{C}_{i}, \quad i = 1,2, \dots, n,
\end{equation}
for known constants $\bar{C}_{i}$.
The physical interpretation of this constraint is clear. In the transient dynamics we have the continuity equations
\begin{equation}
\frac{\partial c_{i}}{\partial t} = - \nabla \cdot (c_{i} v_{i} ) + r_{i}.
\end{equation}
Integrating over $\Omega$ and using the divergence theorem we deduce that
\begin{equation}
\frac{d}{dt} \int_{\Omega} c_{i} = - \int_{\Omega} g_{i} + \int_{\Omega} r_{i}.
\end{equation}
For a steady-state solution to exist, it is necessary that the right hand side of this equation is $0$. Therefore, for all time $t$,
\begin{equation}
\frac{d}{dt} \int_{\Omega} c_{i}  = 0.
\end{equation}
Hence the integral in \eqref{Making Neumann problem well posed.} is independent of time and therefore $\bar{C}_{i}$ is completely specified by the initial conditions. 
\end{remark}

\section{Linearization and well-posedness}

We consider a linearization of Picard type.  The general approach is that whenever a velocity is multiplied by a concentration, we replace the concentration with our current guess. The exception to this is explained in Remark \ref{Remarkaboutlinearfunction}. Let us define the function spaces $X = H^{1}(\Omega)^{n}$, $X_{\Gamma_{D}} = H_{\Gamma_{D}}^{1}( \Omega)^{n}$, $Q = (L^{2}(
\Omega)^{d})^{n}$ as well as the affine function space $X_{\tilde{f}} = (H^{1}_{f_{1}}(\Omega), \dots, H^{1}_{f_{n}}(\Omega))$. We set the norm on $X_{\Gamma_{D}}$ as $\|\cdot \|_{X_{\Gamma_{D}}} = \| \cdot \|_{H_{0}^{1}(\Omega)^{n}}$.
Throughout the rest of this paper we will frequently use the notation $\tilde{q} = (q_{1},\dots,q_{n})$ to denote an $n$-tuple in one of these function/affine function spaces as well as their discrete subspaces.

Given a previous guess for the concentration $\tilde{c}^{k} = (c^{k}_{1},\dots,c^{k}_{n})$, we define a bilinear form
$a_{\tilde{c}^{k}}(\cdot, \cdot) : Q \times Q \rightarrow \mathbb{R} $ given by
\begin{equation} \label{defineAblin}
a_{\tilde{c}^{k}}(\tilde{v}, \tilde{\tau} ) =\sum_{i=1}^{n} 
\int_{\Omega} \Big ( \sum_{j \neq i}^{n} \frac{ c^{k}_{i} c^{k}_{j}}{\mathcal{D}_{ij} c_{\text{T}}} \big ( v_{i} -  v_{j} \big )  +\frac{\gamma M_{i} c^{k}_{i}}{\rho^{k}} \sum_{j=1}^{n} M_{j} c^{k}_{j}  v_{j} \Big ) \cdot \tau_{i} = \sum_{i,j}^{n} \int_{\Omega}\textbf{M}^{\gamma, k}_{ij} v_{j} \cdot \tau_{i},
\end{equation}
 for $\tilde{\tau}, \tilde{v} \in Q$. Here $\textbf{M}^{\gamma, k}$ denotes the augmented transport matrix, the $i,j$ entries being defined by using the current guess for the concentration $\tilde{c}^{k}$ in equations \eqref{transport matrix} and \eqref{augmented transport matrix}. Similarly, $\rho^{k}$ is the density evaluated using $\tilde{c}^{k}$ in \eqref{definition of density}. 
 
 For the current guess $\tilde{c}^{k}$ we also define the bilinear 
form  $b_{\tilde{c}^{k}}: Q \times X \rightarrow \mathbb{R}$,
 \begin{equation}
 b_{\tilde{c}^{k}}(\tilde{\tau}, \tilde{w}  ) = \sum_{i=1}^{n} \int_{\Omega} c^{k}_{i} \tau_{i} \cdot \nabla w_{i},
\end{equation}
for $(\tilde{\tau}, \tilde{w}) \in Q \times X$, and the bilinear form $b: Q \times X \rightarrow \mathbb{R}$, 
\begin{equation}
 b(\tilde{\tau}, \tilde{w}) = \sum_{i=1}^{n} \int_{\Omega} \tau_{i} \cdot \nabla w_{i}.
\end{equation}

 For $\tilde{\tau} \in Q$ the linear functional $l_{\tilde{c}^{k}}(\cdot ) : Q \rightarrow \mathbb{R}$ is defined as
\begin{equation} \label{linearfunctional}
l_{\tilde{c}^{k}}(\tilde{\tau}) = \gamma \sum_{i=1}^{n} \int_{\Omega} \frac{c^{k}_{i} M_{i}}{\rho^{k}} \tau_{i} \cdot u.
\end{equation}

The non-linear iteration scheme is as follows. We take an initial guess $(\tilde{v}^{0}, \tilde{c}^{0}) \in Q \times X_{\tilde{f}} $ which satisfies the Dirichlet boundary data \eqref{Boundarydata1} and
\begin{equation} \label{totalconcentrationguess}
\sum_{i=1}^{n} c^{0}_{i} = c_{\text{T}}
\end{equation}
almost everywhere for a given constant $c_{\text{T}}$,  determined by either \eqref{DirichletBC consistency} or \eqref{Making Neumann problem well posed.}. For $k=0,1,2,\dots$ the next iterate of the sequence is computed as the solution to the following generalized saddle point problem: find $(\tilde{v}^{k+1}, \tilde{c}^{k+1}) \in Q \times X_{\tilde{f}}$ such that
\begin{align} \label{Linearsaddlepointsystem1}
&a_{\tilde{c}^{k}}(\tilde{v}^{k+1}, \tilde{\tau}) + b(\tilde{\tau},  \tilde{c}^{k+1}) =l_{\tilde{c}^{k}}(\tilde{\tau}), \:\:\: \forall \: \tilde{\tau} \in Q, \\ \label{Linearsaddlepointsystem2}
& b_{\tilde{c}^{k}}( \tilde{v}^{k+1}, \tilde{w})  = -(\tilde{r}, \tilde{w})_{L^{2}(\Omega)^{n}} + (\tilde{g}, \tilde{w} )_{L^{2}(\Gamma_{N})^{n}}, \ \:\:\: \forall \: \tilde{w} \in X_{\Gamma_{D}},
\end{align}
subject to the Dirichlet conditions \eqref{Boundarydata2}. This is repeated until
\begin{equation}
\| \tilde{c}^{k+1}-\tilde{c}^{k} \|_{X} + \| \tilde{v}^{k+1}-\tilde{v}^{k} \|_{Q} \leq \varepsilon,
\end{equation} 
 for a set tolerance $\varepsilon > 0$.
 
  Note that $(\tilde{v}^{k}, \tilde{c}^{k})$ is a weak solution to the non-linear problem \eqref{WeakformOurMSeqn}-\eqref{weakcontinuity} if and only if it is a fixed point of this iteration scheme. Indeed if $(\tilde{v}^{k}, \tilde{c}^{k})$ is a weak solution to the non-linear problem \eqref{WeakformOurMSeqn}-\eqref{weakcontinuity} then the solution $(\tilde{v}^{k+1}, \tilde{c}^{k+1})$ to the equations \eqref{Linearsaddlepointsystem1}-\eqref{Linearsaddlepointsystem2} remains $(\tilde{v}^{k}, \tilde{c}^{k})$. Conversely if $(\tilde{v}^{k+1}, \tilde{c}^{k+1})=(\tilde{v}^{k}, \tilde{c}^{k})$ then, converting \eqref{Linearsaddlepointsystem1}-\eqref{Linearsaddlepointsystem2} to a non-linear system by replacing $\tilde{c}^{k}$ with $\tilde{c}^{k+1}$, we recover the non-linear problem \eqref{WeakformOurMSeqn}-\eqref{weakcontinuity} and observe it is solved with  $(\tilde{v}^{k+1}, \tilde{c}^{k+1})$.

We proceed to prove well-posedness of the linear system \eqref{Linearsaddlepointsystem1}-\eqref{Linearsaddlepointsystem2} by applying either  Theorem $2.1$ in \cite{GeneralSaddlepoint} or Theorem 3.1 in \cite{Generalisedsaddleerror1982}. 
 To invoke these theorems we shall prove the following conditions.

\textit{Condition 1}: 
There exists a constant $\alpha>0$ such that
\begin{equation} \label{Condition1}
a_{\tilde{c}^{k}}(\tilde{v},\tilde{v}) \geq \alpha \| \tilde{v} \|^{2}_{Q}
\end{equation}
for all $\tilde{v} \in Q$.

 \textit{Condition 2}:
There exist constants $\beta_{i} > 0$, $i=1,2$ such that for all $\tilde{w} \in X$,
\begin{equation}
 \begin{aligned} \label{Condition2}
\underset{\tau \in Q}{\sup} \frac{ b(\tilde{\tau}, \tilde{w})}{\| \tilde{\tau} \|_{Q}} &\geq \beta_{1} \| \tilde{w} \|_{X}, \\
\underset{\tau \in Q}{\sup} \frac{ b_{\tilde{c}^{k}}(\tilde{\tau}, \tilde{w})}{\| \tilde{\tau} \|_{Q}} &\geq \beta_{2} \| \tilde{w} \|_{X}.
 \end{aligned}
 \end{equation}
 
 \begin{remark} \label{Remarkaboutlinearfunction}
 An alternative to our definition of the linear functional \eqref{linearfunctional} would be to replace $\tilde{c}^{k}$ with $\tilde{c}^{k+1}$ and  therefore include the term as part of the bilinear functional $b(\cdot, \cdot)$ instead. However, the current formulation \eqref{Linearsaddlepointsystem1}-\eqref{Linearsaddlepointsystem2} ensures that we can derive the equivalent of \eqref{weakNotquiteGD} for the linearized system
 \begin{equation} 
 \sum_{i=1}^{n} \int_{\Omega} \gamma M_{i} c^{k}_{i} v_{i} \cdot \tau - \int_{\Omega} \gamma u  \cdot \tau +\int_{\Omega}  \sum_{i=1}^{n} \nabla c_{i}^{k+1} \cdot \tau  = 0.
\end{equation}\\
Then, following 
an argument identical to that presented in section 3, we deduce that for each $k$, the iterates satisfy
\begin{equation}
\sum_{i=1}^{n} c_{i}^{k+1} = c_{\text{T}}
\end{equation}
almost everywhere. When combined with the assumption that the concentrations are positive almost everywhere, this implies that $a_{\tilde{c}^{k}}(\cdot, \cdot), b(\cdot, \cdot), b_{\tilde{c}^{k}}(\cdot, \cdot)$ are all bounded bilinear functionals on their respective function spaces.
\end{remark}
 
 \begin{remark}
The common alternative, to formulate the problem in terms of molar fluxes rather than velocities, has the advantage that the continuity equations do not need to be linearized. However, a disadvantage is that the resulting bilinear form $a(\cdot, \cdot)$ is no longer symmetric or coercive, which would add significant difficulty to the analysis.
\end{remark}
 
 In order to prove \eqref{Condition1} it will be useful to write the bilinear form, $a_{\tilde{c}^{k}}(\cdot, \cdot)$ as the integral of a quadratic form. For this purpose it is useful to define the matrix
\begin{equation}
\mathcal{M}^{\gamma,k} = \textbf{M}^{\gamma, k} \otimes \textbf{I}
\end{equation}
where $\textbf{I}$ is the $d \times d$ identity matrix and $\otimes$ is the Kronecker product.  We can then write the bilinear form as 
\begin{equation}
a_{\tilde{c}^{k}}(\tilde{v}, \tilde{\tau} )  = \int_{\Omega} \tilde{\tau} \cdot  \mathcal{M}^{\gamma,k} \tilde{v}.
\end{equation}
To show the coercivity condition \eqref{Condition1}  we must show for some $\alpha>0$
\begin{equation}
a_{\tilde{c}^{k}}(\tilde{v}, \tilde{v} ) = \int_{\Omega} \tilde{v} \cdot  \mathcal{M}^{\gamma,k} \tilde{v} \geq \int_{\Omega} \alpha | \tilde{v} |^{2} 
\end{equation}
Hence \eqref{Condition1} is satisfied if and only if $\mathcal{M}^{\gamma,k}$ is uniformly positive definite over $\Omega$ almost everywhere.  Either by direct calculation, or by using a standard property of the Kronecker delta product, one can verify that $\mathcal{M}^{\gamma,k}$ will have the same eigenvalues as $\textbf{M}^{\gamma,k}$, each with geometric multiplicity of $d$. Therefore coercivity of the bilinear form $a_{\tilde{c}^{k}}(\cdot, \cdot)$ is equivalent to showing that $\textbf{M}^{\gamma,k}$ is symmetric positive definite almost everywhere in $\Omega$.

Assuming that every component of our current guess $\tilde{c}^{k}$ is strictly positive almost everywhere, we prove positive definiteness of $\textbf{M}^{\gamma,k}$ in the following lemma.
\begin{lemma} \label{Positivedefinitelemma}
If $c^{k}_{i} \geq \kappa >0$ a.e.~for each $i=1,2,\dots,n$ and a positive constant $\kappa$, then for any $\gamma >0$, the matrix $\textbf{M}^{\gamma,k}$ is symmetric positive definite almost everywhere.
\end{lemma}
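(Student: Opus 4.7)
My plan is to decompose $\textbf{M}^{\gamma,k} = \textbf{M}^k + \gamma \mathcal{L}^k$ and argue that each summand is symmetric positive semidefinite, with complementary nullspaces so that their sum is strictly positive definite.

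Symmetry is immediate: $\textbf{M}^k$ is symmetric by the reciprocal relation $\mathcal{D}_{ij} = \mathcal{D}_{ji}$, and $\mathcal{L}^k_{ij} = RT M_i M_j c^k_i c^k_j / \rho^k$ is manifestly symmetric in $(i,j)$. For the semidefinite structure, the hypothesis $c^k_i \geq \kappa > 0$ places us pointwise almost everywhere in the regime where the spectral structure \eqref{SpectrumofM} applies, so $\textbf{M}^k(x)$ is positive semidefinite a.e.\ with its single null direction spanned by $\mathbf{1} := (1,1,\dots,1)^\top$. Independently, setting $a := (M_1 c^k_1,\dots,M_n c^k_n)^\top$, the augmentation $\mathcal{L}^k = (RT/\rho^k)\, a a^\top$ is rank one and positive semidefinite, with
\[
v^\top \mathcal{L}^k v = \frac{RT}{\rho^k}(a \cdot v)^2 \geq 0 \quad \text{for all } v \in \mathbb{R}^n.
\]
Hence $\textbf{M}^{\gamma,k}$ is PSD as a sum of two PSD matrices.

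To upgrade from semidefinite to definite, I would take any $v \in \mathbb{R}^n$ with $v^\top \textbf{M}^{\gamma,k} v = 0$ at a point where \eqref{SpectrumofM} holds. Since both contributions are nonnegative, they must vanish separately. From $v^\top \textbf{M}^k v = 0$ together with the spectral structure, we obtain $v = \lambda \mathbf{1}$ for some $\lambda \in \mathbb{R}$. Substituting into the augmentation term gives
\[
0 = v^\top \mathcal{L}^k v = \frac{RT}{\rho^k}\Bigl(\lambda \sum_i M_i c^k_i \Bigr)^2 = RT \rho^k \lambda^2,
\]
and since $\rho^k \geq \kappa \sum_i M_i > 0$ a.e., this forces $\lambda = 0$ and hence $v = 0$.

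There is no really hard step here; the argument is mostly bookkeeping showing that the rank-one augmentation $\gamma\mathcal{L}^k$ plugs the single null direction of $\textbf{M}^k$. The only subtle point is that the spectral structure \eqref{SpectrumofM} is a pointwise statement requiring strictly positive concentrations, and the uniform lower bound $\kappa$ in the hypothesis provides exactly this regularity almost everywhere in $\Omega$.
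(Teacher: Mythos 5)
Your proof is correct, and it follows the same high-level strategy as the paper --- decompose $\textbf{M}^{\gamma,k} = \textbf{M}^{k} + \gamma\mathcal{L}^{k}$, note both are symmetric PSD, and show the rank-one augmentation is strictly positive on the null direction $\mathbf{1}$ of $\textbf{M}^{k}$ --- but you execute that strategy more cleanly and directly. The paper instead tries to build a hybrid eigenbasis of $\textbf{M}^{k}$ and $\mathcal{L}^{k}$, asserting in passing that $n^{-1/2}\mathbf{1}$ is an eigenvector of $\mathcal{L}^{k}$ with eigenvalue $\rho^{k}$. That assertion does not hold in general: since $\mathcal{L}^{k} = (RT/\rho^{k})\, a a^{\top}$ with $a = (M_{1}c^{k}_{1},\dots,M_{n}c^{k}_{n})^{\top}$, the only eigenvector with nonzero eigenvalue is parallel to $a$, which is parallel to $\mathbf{1}$ only if all $M_{i}c^{k}_{i}$ coincide. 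One still has $\mathcal{L}^{k}\mathbf{1} = a \neq 0$, so $\mathbf{1}$ does not lie in the nullspace of $\mathcal{L}^{k}$, and the lemma is true; but the paper's eigenbasis expansion as written conflates two different sets of coefficients. Your route avoids this entirely: if $v^{\top}\textbf{M}^{\gamma,k}v = 0$ then both PSD quadratic forms vanish, the first forces $v = \lambda\mathbf{1}$, and then $v^{\top}\mathcal{L}^{k}v = RT\,\rho^{k}\lambda^{2} = 0$ together with $\rho^{k} \geq \kappa\sum_{i}M_{i} > 0$ forces $\lambda = 0$. This is the tighter argument, and it makes explicit exactly where the hypothesis $c^{k}_{i}\geq\kappa$ is used (both for the spectral structure of $\textbf{M}^{k}$ and for the strict positivity of $\rho^{k}$).
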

\begin{proof}
For almost every $x \in \Omega$, $\textbf{M}^{k}$ is symmetric positive semidefinite. We proceed with the following argument pointwise. The normalized eigenvectors $\{\vartheta^{M}_{1},\dots,\vartheta^{M}_{n} \}$ form an orthonormal basis. By hypothesis the associated eigenvalues $\{\lambda^{\textbf{M}}_{1},\dots,\lambda^{\textbf{M}}_{n} \}$ can be ordered such that 
\begin{equation} \label{Ordering of eigenvalues}
0=\lambda^{\textbf{M}}_{1} < \lambda_{2}^{\textbf{M}} \leq \dots \leq \lambda^{\textbf{M}}_{n}.
\end{equation}
The nullspace of $\textbf{M}^{k}$ then consists of the space spanned by the vector $\vartheta^{M}_{1} = n^{-1/2}(1,1,\dots,1) \in \mathbb{R}^{n}$. Furthermore, 
 \begin{equation} \label{BoundonlowerEval}
 \lambda^{\textbf{M}}_{2} \geq \lambda_{\kappa} >0
 \end{equation} 
for a $\lambda_{\kappa}$ that depends only on $\kappa$.
 
 Given any $\tilde{\vartheta} \in \mathbb{R}^{n}$ we can expand it in terms of the basis $\{\vartheta^{M}_{1},\dots,\vartheta^{M}_{n} \}$ as
\begin{equation}
\tilde{\vartheta} = \sum_{i=1}^{n} \alpha_{i} \vartheta^{M}_{i}
\end{equation}
for basis coefficients $\{ \alpha_{i} \}_{i=1}^{n}$. Furthermore, by orthonormality,
\begin{equation} \label{Basisexpansion}
\tilde{\vartheta} \cdot \textbf{M}^{k} \tilde{\vartheta} = \sum_{i=1}^{n} \lambda^{\textbf{M}}_{i} |\alpha_{i}|^{2}.
\end{equation}
 The matrix $\mathcal{L}^{k}$ defined in \eqref{definition of L} is also symmetric positive semidefinite, explicitly for $\tilde{\vartheta} = (\vartheta_{1},\dots,\vartheta_{n}) \in \mathbb{R}^{n}$
 \begin{equation}
 \tilde{\vartheta} \cdot \mathcal{L}^{k} \tilde{\vartheta} = \frac{1}{\rho^{k}}\Big ( \sum^{n}_{j=1} M_{j}c_{j}^{k} \vartheta_{j} \Big )^{2}.
 \end{equation}
 Hence we can also construct a basis $\{\vartheta^{\mathcal{L}}_{1},\dots,\vartheta^{\mathcal{L}}_{n} \}$  of orthonormal eigenvectors.
 The vector $\vartheta^{M}_{1}$ is also an eigenvector of $\mathcal{L}^{k}$ with the eigenvalue $\rho^{k}$. We will identify this eigenvector as $\vartheta^{\mathcal{L}}_{1}$. $\mathcal{L}^{k}$ is of rank $1$ as it is the outer product of a vector with itself, and hence all other eigenvalues are zero.

 Hence for a given $\tilde{\vartheta} \in \mathbb{R}^{n}$ we can expand it as
\begin{equation}
\tilde{\vartheta} = \alpha_{1} \vartheta^{M}_{1} +\sum_{i=2}^{n} \beta_{i} \vartheta^{\mathcal{L}}_{i},
\end{equation}
for basis coefficients $\{\beta \}_{i=1}^{n}$ and calculate
\begin{equation}
\tilde{\vartheta} \cdot \mathcal{L}^{k} \tilde{\vartheta} = \rho^{k} |\alpha_{1}|^{2}.
\end{equation}

Consequently, 
\begin{equation}
\tilde{\vartheta} \cdot \textbf{M}^{\gamma, k} \tilde{\vartheta} = \gamma \rho^{k} |\alpha_{1}|^{2} + \sum_{i=2}^{n} \lambda^{\textbf{M}}_{i} |\alpha_{i}|^{2}
\end{equation}
and therefore $\textbf{M}^{\gamma, k}$ is positive definite at $x$. This argument can be repeated for every $x \in \Omega$ except perhaps on a set of measure zero. Therefore $\textbf{M}^{\gamma, k}$ is symmetric positive definite almost everywhere.
\end{proof}

\begin{remark} \label{Scalingremark}
It is useful to understand how $\lambda_{\kappa}$ scales with $\kappa$. This can be achieved by the following scaling argument. Suppose that whenever $c_{i}^{k} \geq 1$ for each $i=1,2,\dots,n$ we have the lower bound on the eigenvalues, as in \eqref{BoundonlowerEval}, of $\lambda_{\kappa = 1}$.  Now suppose that for any $\kappa >0$ we have $c^{k}_{i} \geq \kappa$ for each $i=1,2,\dots,n$. We can then define the new variables $\kappa_{i} = c^{k}_{i}/\kappa$. We then see that $\kappa_{i} \geq 1$ for each $i$. Define the $\textbf{M}_{\kappa}$ as the transport matrix with these new variables $\kappa_{i}$ replacing $c_{i}$. By direct calculation we can check that
\begin{equation} \label{Scaledmatrix}
\textbf{M}_{\kappa} = \frac{1}{\kappa} \textbf{M}.
\end{equation}

By construction we have that $\lambda^{\textbf{M}_{\kappa}}_{2} \geq \lambda_{\kappa =1}$. It follows from \eqref{Scaledmatrix} that $\lambda^{\textbf{M}}_{2} = \kappa \lambda^{\textbf{M}_{\kappa}}_{2}\geq \kappa \lambda_{\kappa =1}$. Hence we see that $\lambda_{\kappa} = {\rm O}(\kappa)$.
\end{remark}

 \begin{lemma} \label{Wellposednessconditionlemma}
 Assume that $\bar{c}_{i} \geq \kappa >0$ a.e.~for each $i = 1,2,\dots,n$ and $\gamma >0$. Then the  bilinear forms $a(\cdot, \cdot), b(\cdot, \cdot)$ and $b_{\tilde{c}^{k}}(\cdot, \cdot)$ satisfy the conditions \eqref{Condition1} and \eqref{Condition2} for some constants $\alpha, \beta_{1}, \beta_{2}$ respectively, which depend only on $\kappa$, $\Omega$.
 \end{lemma}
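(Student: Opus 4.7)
The plan is to verify the three conditions separately, leveraging the pointwise lower bound $c_i^k \geq \kappa$ together with the iteration-preserved identity $\sum_i c_i^k = C_{\text{T}}$ from Remark \ref{Remarkaboutlinearfunction}. I expect the two inf-sup estimates to follow from explicit choices of test functions, while the coercivity constant in Condition 1 will require slightly more care.

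For Condition 1, I would recast $a_{\tilde c^k}(\tilde v, \tilde v) = \int_\Omega \tilde v^{\top} \mathcal{M}^{\gamma,k} \tilde v$ and reduce the coercivity to a uniform pointwise lower bound on the smallest eigenvalue $\lambda_{\min}(\textbf{M}^{\gamma,k}(x))$. Lemma \ref{Positivedefinitelemma} already yields pointwise positive definiteness; to upgrade this to a single positive constant $\alpha$, I would argue by compactness. The entries of $\textbf{M}^{\gamma,k}$ are continuous functions of $(c_1^k(x),\dots,c_n^k(x))$, which takes values in the compact set $K := [\kappa, C_{\text{T}}]^n$ (the lower bound by hypothesis, the upper bound from $c_i^k \leq \sum_j c_j^k = C_{\text{T}}$). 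Since $\lambda_{\min}$ is a continuous function of the matrix entries and is strictly positive throughout $K$ by Lemma \ref{Positivedefinitelemma}, it attains a minimum $\alpha > 0$ there; integrating over $\Omega$ then gives $a_{\tilde c^k}(\tilde v, \tilde v) \geq \alpha \|\tilde v\|_Q^2$. Alternatively one can redo the eigenvector decomposition used in the proof of Lemma \ref{Positivedefinitelemma}, combining the bound $\lambda^{\textbf{M}^k}_2 \geq \lambda_\kappa$ from Remark \ref{Scalingremark} with a Young-type estimate on the cross term induced by $\gamma \mathcal{L}^k$, but the compactness route is cleaner.

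For the inf-sup bound on $b(\cdot,\cdot)$, given $\tilde w \in X_{\Gamma_D}$ I would take $\tilde \tau = (\nabla w_1, \dots, \nabla w_n) \in Q$, which yields
\[
b(\tilde\tau, \tilde w) \;=\; \|\nabla \tilde w\|_{L^2}^2 \;=\; \|\tilde\tau\|_Q \|\nabla \tilde w\|_{L^2},
\]
so the supremum is bounded below by $\|\nabla \tilde w\|_{L^2}$; the Poincaré inequality on $X_{\Gamma_D}$ (applicable since the pure Neumann case is excluded by Remark \ref{Nonlinearnullspace}) then controls $\|\tilde w\|_X$ by a constant $\beta_1$ depending only on $\Omega$. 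For $b_{\tilde c^k}(\cdot,\cdot)$ the analogous trick works after a rescaling: choose $\tau_i = (c_i^k)^{-1} \nabla w_i$, which lies in $L^2$ precisely because $c_i^k \geq \kappa > 0$; then $b_{\tilde c^k}(\tilde\tau, \tilde w) = \|\nabla \tilde w\|_{L^2}^2$ while $\|\tilde\tau\|_Q \leq \kappa^{-1} \|\nabla \tilde w\|_{L^2}$, so the supremum dominates $\kappa \|\nabla \tilde w\|_{L^2}$ and a second Poincaré application produces $\beta_2$ proportional to $\kappa$.

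The principal obstacle is the uniform spectral bound in Condition 1: Lemma \ref{Positivedefinitelemma} delivers only pointwise positive-definiteness, and extracting a single constant $\alpha$ requires not just the hypothesized lower bound on $c_i^k$ but also the upper bound $c_i^k \leq C_{\text{T}}$, which is an input from the total-concentration invariance discussed in Remark \ref{Remarkaboutlinearfunction} rather than from the hypotheses of the present lemma. Once that compact range for $\tilde c^k(x)$ is in hand, the three estimates reduce to elementary continuity and Poincaré arguments.
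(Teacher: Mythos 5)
Your proof is correct but diverges from the paper's in two places, so a comparison is useful. For coercivity, the paper does not use compactness: the key structural fact established in Lemma~\ref{Positivedefinitelemma} is that the null eigenvector $\vartheta^M_1 = n^{-1/2}(1,\dots,1)$ of $\textbf{M}^k$ is simultaneously the only eigenvector of $\mathcal{L}^k$ with a nonzero eigenvalue (namely $\rho^k$), so the quadratic form block-diagonalizes as $\gamma\rho^k|\alpha_1|^2 + \sum_{i\ge 2}\lambda_i^{\textbf{M}}|\alpha_i|^2$ with no cross term at all --- your proposed ``Young-type estimate on the cross term'' is unnecessary and slightly misreads that lemma. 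The paper then simply takes $\alpha = \min\{\gamma\rho^k,\lambda_\kappa\}$, using $\rho^k \ge \kappa\sum_i M_i$ and the bound $\lambda_2^{\textbf{M}^k}\ge\lambda_\kappa$ asserted after \eqref{SpectrumofM}, which depends only on $\kappa$ and is in fact derivable by a Cauchy--Schwarz argument from the lower bound on concentrations alone, with no upper bound required. Your compactness route on $K=[\kappa,C_{\text{T}}]^n$ is therefore valid but not necessary: the dependence on $C_{\text{T}}$ it introduces is an artifact of your argument rather than a hidden hypothesis of the lemma, as your last paragraph suggests. For the inf-sup bounds your treatment of $b$ is identical to the paper's; for $b_{\tilde{c}^k}$ the paper reuses the simpler test function $\tilde\tau = \nabla\tilde w$ and the bound $c_i^k\ge\kappa$ directly, giving $b_{\tilde{c}^k}(\nabla\tilde w,\tilde w)\ge\kappa\|\nabla\tilde w\|^2_{L^2(\Omega)}$, whereas you rescale by $(c_i^k)^{-1}$; both yield $\beta_2\propto\kappa C_p^{-1}$. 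One small slip: the pure Neumann case is not excluded --- the Poincar\'e inequality is available either when $\Gamma_D\neq\emptyset$ or under constraint \eqref{Making Neumann problem well posed.}, and the lemma is intended to cover both.
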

 
 \begin{proof}
 From Lemma \eqref{Positivedefinitelemma} we have that
 \begin{equation}
 a_{\tilde{c}^{k}}(\tilde{v}, \tilde{v} ) = \int_{\Omega} \tilde{\tau} \cdot \mathcal{M}^{\gamma, k} \tilde{v} = \sum_{i,j} \int_{\Omega} v_{j} \cdot \textbf{M}_{ij}^{\gamma,k} v_{i} \geq \alpha \|\tilde{v}\|_{Q}^{2},
 \end{equation}
where
 \begin{equation} \label{Boundonalpha}
 \alpha = \text{min} \{ \gamma  \rho^{k}, \lambda_{\kappa} \},
 \end{equation}
and $\lambda_{\kappa}$ is as in equation \eqref{BoundonlowerEval}. This proves condition \eqref{Condition1}.
 
 For conditions \eqref{Condition2}, given a $\tilde{w} \in X$, we can choose $\tilde{\tau} = \nabla \tilde{w}$ which then yields
 \begin{equation}
 b(\nabla \tilde{w}, \tilde{w} ) = \sum_{i=1}^{n} \int_{\Omega} | \nabla w_{i} |^{2} =  \| \tilde{w} \|_{X_{\Gamma_{D}}}.
 \end{equation}
 Similarly for $b_{\tilde{c}^{k}}$ we have
  \begin{equation}
 b_{\tilde{c}^{k}}(\nabla \tilde{w}, \tilde{w} ) \geq \kappa \| \tilde{w} \|_{X_{\Gamma_{D}}}.
 \end{equation}
 
 The final step is that we use either $\Gamma_{D} \neq \emptyset$ or the condition \eqref{Making Neumann problem well posed.} to deduce a Poincar\'e inequality of the form
 \begin{equation} \label{poincare}
 C_{p} \|\tilde{w} \|_{X_{\Gamma_{D}}} \geq \|\tilde{w} \|_{X} \text{ for all } \tilde{w} \in X
 \end{equation}
 for some constant $C_{p}>0$ depending only on $\Omega$. Hence
 \begin{align}
 &  b(\nabla \tilde{w}, \tilde{w} ) \geq C^{-1}_{p} \|\tilde{w} \|_{X} \\ 
& b_{\tilde{c}^{k}}(\nabla \tilde{w}, \tilde{w} ) \geq \kappa C^{-1}_{p} \|\tilde{w} \|_{X}.
 \end{align}
 \end{proof}

\begin{theorem} Assume $\gamma >0$ and the current guess $\tilde{c}^{k}$ satisfies $c^{k}_{i} \geq \kappa > 0$ a.e.~for each $i=1,2,\dots,n$ and a positive constant $\kappa$. Then, under the condition $\Gamma_{D} \neq \emptyset$ or \eqref{Making Neumann problem well posed.}, there exists a unique $(\tilde{v}^{k+1}, \tilde{c}^{k+1}) \in Q \times X_{\tilde{f}}$ which solves the system \eqref{Linearsaddlepointsystem1}-\eqref{Linearsaddlepointsystem2}.
\end{theorem}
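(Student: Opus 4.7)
The plan is to apply the generalized saddle point theorem (Theorem 2.1 of \cite{GeneralSaddlepoint} or Theorem 3.1 of \cite{Generalisedsaddleerror1982}), which requires coercivity of $a_{\tilde{c}^k}$ on the kernel of $b_{\tilde{c}^k}$ (here in fact on all of $Q$), two inf-sup conditions (for $b$ and for $b_{\tilde{c}^k}$), continuity of all three bilinear forms, and continuity of the two right-hand side linear functionals. The coercivity constant on $Q$ and the two inf-sup constants are already supplied by Lemma \ref{Wellposednessconditionlemma}, so most of the structural work is done.

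The first step I would take is to reduce the problem to one posed on the linear space $X_{\Gamma_D}$. Since $f_i \in H^{1/2}(\Gamma_D)$, standard trace theory yields a lift $\tilde{c}_{\tilde{f}} \in X_{\tilde{f}}$. Writing $\tilde{c}^{k+1} = \tilde{c}_{\tilde{f}} + \tilde{\phi}$ with $\tilde{\phi} \in X_{\Gamma_D}$, the system \eqref{Linearsaddlepointsystem1}--\eqref{Linearsaddlepointsystem2} becomes a saddle point problem for $(\tilde{v}^{k+1}, \tilde{\phi}) \in Q \times X_{\Gamma_D}$ with modified data
\begin{align*}
\tilde{l}(\tilde{\tau}) &= l_{\tilde{c}^k}(\tilde{\tau}) - b(\tilde{\tau}, \tilde{c}_{\tilde{f}}), \\
\tilde{m}(\tilde{w}) &= -(\tilde{r}, \tilde{w})_{L^2(\Omega)^n} + (\tilde{g}, \tilde{w})_{L^2(\Gamma_N)^n}.
\end{align*}
Both $\tilde{l}$ and $\tilde{m}$ are continuous: for $\tilde{l}$ this uses the pointwise bound $M_i c_i^k / \rho^k \le 1$ together with $u \in L^2(\Omega)^d$, continuity of $b$, and $\tilde{c}_{\tilde{f}} \in X$; for $\tilde{m}$ this follows from $\tilde{r} \in L^2(\Omega)^n$, $\tilde{g} \in H^{-1/2}(\Gamma_N)^n$, and the trace theorem on $X_{\Gamma_D}$.

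Next I would verify continuity of the three bilinear forms on their respective spaces. Since $\tilde{c}^k \in X \hookrightarrow L^\infty(\Omega)^n$ is not automatic in general dimensions, one uses instead that in Remark \ref{Remarkaboutlinearfunction} the iterates are shown to satisfy $\sum_i c_i^{k+1} = c_{\text{T}}$, so the nonnegativity of each $c_i^k$ and the pointwise bound $c_i^k \le c_{\text{T}}$ give a uniform $L^\infty$ bound. This bound, combined with $c_i^k \ge \kappa > 0$ a.e., implies that every entry of $\textbf{M}^{\gamma,k}$ lies in $L^\infty(\Omega)$, so $a_{\tilde{c}^k}$ is continuous on $Q \times Q$; similarly $b_{\tilde{c}^k}$ is continuous on $Q \times X$, while continuity of $b$ is immediate from Cauchy--Schwarz.

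With all hypotheses in place, the cited generalized saddle point theorem yields a unique solution $(\tilde{v}^{k+1}, \tilde{\phi}) \in Q \times X_{\Gamma_D}$, and setting $\tilde{c}^{k+1} = \tilde{c}_{\tilde{f}} + \tilde{\phi} \in X_{\tilde{f}}$ completes the argument. The main obstacle is the slight asymmetry of the system: because $b \neq b_{\tilde{c}^k}$, the classical Brezzi theorem does not apply directly, which is why one must invoke the Bernardi--Canuto--Maday or Nicolaides generalized version; one has to check carefully that both inf-sup conditions hold with constants independent of the specific iterate (beyond dependence on $\kappa$), which is precisely what Lemma \ref{Wellposednessconditionlemma} provides. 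The Poincaré inequality \eqref{poincare} used there encodes either the $\Gamma_D \neq \emptyset$ assumption or the zero-mean constraint \eqref{Making Neumann problem well posed.}, which is essential for uniqueness.
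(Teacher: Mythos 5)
Your proof is correct and follows essentially the same route as the paper: reduce to a homogeneous saddle point problem on $Q \times X_{\Gamma_D}$, verify coercivity of $a_{\tilde{c}^k}$ and the two inf-sup conditions via Lemma \ref{Wellposednessconditionlemma}, check boundedness of the forms and functionals, and invoke the Bernardi--Canuto--Maday or Nicolaides generalized saddle point theorem. The only cosmetic difference is the choice of lift: you introduce a generic trace lift $\tilde{c}_{\tilde{f}}$, whereas the paper uses the initial guess $\tilde{c}^0 \in X_{\tilde{f}}$ that is already specified in the iteration (which is the lift that actually appears on the right-hand side of the recast system \eqref{HomoLinearsaddlepointsystem1}--\eqref{HomoLinearsaddlepointsystem2} and in the discrete scheme); both choices are valid here, and your extra explicit verification of boundedness---via $M_i c_i^k/\rho^k \le 1$, the Remark \ref{Remarkaboutlinearfunction} identity $\sum_i c_i^k = c_{\text{T}}$, and $c_i^k \ge \kappa$---spells out what the paper leaves to that remark.
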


\begin{proof} 
Our remaining obstacle for the proof is that $X_{\tilde{f}}$ is not a Hilbert space. If we use the ansatz $\tilde{c}^{k+1} = \widehat{c}^{k+1}_{0} + \tilde{c}^{0}$, where  $\widehat{c}^{k+1}_{0} \in X_{\Gamma_{D}}$ and $\tilde{c}^{0} \in X_{\tilde{f}}$ was our initial guess, then we can recast the saddle point problem \eqref{Linearsaddlepointsystem1}-\eqref{Linearsaddlepointsystem2} as: find $(\tilde{v}^{k+1} , \widehat{c}^{k+1}_{0} ) \in Q \times X_{\Gamma_{D}}$ such that
\begin{align} \label{HomoLinearsaddlepointsystem1}
&a_{\tilde{c}^{k}}(\tilde{v}^{k+1}, \tilde{\tau}) + b(\tilde{\tau},  \widehat{c}^{k+1}_{0}) =l_{\tilde{c}^{k}}(\tilde{\tau}) -  b(\tilde{\tau},  \tilde{c}^{0}) \:\:\: \forall \: \tilde{\tau} \in Q, \\ \label{HomoLinearsaddlepointsystem2}
& b_{\tilde{c}^{k}}( \tilde{v}^{k+1}, \tilde{w})  = -(\tilde{r}, \tilde{w})_{L^{2}(\Omega)^{n}} + (\tilde{g}, \tilde{w} )_{L^{2}(\Gamma_{N})^{n}}\, \:\:\: \forall \: \tilde{w} \in X_{\Gamma_{D}}.
\end{align}
By \cite[Theorem 2.1]{GeneralSaddlepoint} or \cite[Theorem 3.1]{Generalisedsaddleerror1982} there exists a unique $(\tilde{v}^{k+1}, \widehat{c}^{k+1}_{0}) \in Q \times X_{\Gamma_{D}}$ solution to this system. The proof concludes by observing that if $\tilde{c}^{k+1} = \widehat{c}^{k+1}_{0} + \tilde{c}^{0}$ then $\tilde{c}^{k+1} \in X_{\tilde{f}}$ and satisfies the system \eqref{Linearsaddlepointsystem1}-\eqref{Linearsaddlepointsystem2}.
\end{proof}

\section{Discretization and error estimates}

Here we discretize the generalized saddle point problem \eqref{Linearsaddlepointsystem1}-\eqref{Linearsaddlepointsystem2} and prove error estimates. Let $\mathcal{T}_{h}$ be a regular triangulation of $\Omega$ with maximum diameter $h$. For $m \geq 1$  we define the finite dimensional subspaces,
\begin{align} \label{discretespaceQ}
& Q_{h} = \{ \tilde{\tau}_{h} \in Q \: | \: \left.\tau_{h,i}\right|_K \in P^{m-1}(K) \:\: \forall \: K \in \mathcal{T}_{h}, \:\: i=1,2,\dots,n \}, \\ \label{discretespaceX}
& X_{h} = \{ \tilde{w}_{h} \in X \: | \: \left.w_{h,i}\right|_K \in P^{m}(K) \:\: \forall \: K \in \mathcal{T}_{h}, \:\: i=1,2,\dots,n \},
\\ \label{discretespaceX0}
& X_{\Gamma_{D},h} = \{ \tilde{w}_{h} \in X_{\Gamma_{D}} \: | \: \left.w_{h,i}\right|_K \in P^{m}(K) \:\: \forall \: K \in \mathcal{T}_{h}, \:\: i=1,2,\dots,n \}.
\end{align}
Here $P^{m}(K)$ denotes the set of $m^{\mathrm{th}}$ order polynomials on the cell $K \in \mathcal{T}_{h}$.

We will require linear interpolation operators on the spaces $X$ and $Q$, see \cite[pp~72]{boffi2013mixed}.
\begin{proposition} \label{errorestimateproposition}
There exist linear interpolation operators $\Pi_{h} : X \rightarrow X_{h}$ and $\Lambda_{h}: Q \rightarrow Q_{h}$ and constants $C_{1}, C_{2}$ such that, for any $\tilde{c} \in X$, $\tilde{v} \in Q$,
\begin{align*}
& \| \tilde{c} - \Pi_{h} \tilde{c} \|_{X} \leq C_{1} h^{m} \| \tilde{c} \|_{H^{m+1}_{0}(\Omega)^{n}}, \\
& \| \tilde{v} - \Lambda_{h} \tilde{v} \|_{Q} \leq C_{2} h^{m}  \| \tilde{v} \|_{(H^{m}_{0}(\Omega)^{d})^{n}}.  
\end{align*}
\end{proposition}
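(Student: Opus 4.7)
The plan is to build $\Pi_h$ and $\Lambda_h$ component-wise from two classical scalar interpolation operators on simplicial meshes, and to reduce the stated bounds to the corresponding scalar estimates. The proposition is essentially a packaging of standard finite element approximation theory (as the reference to Boffi--Brezzi--Fortin indicates), so the work is mostly one of choosing operators compatible with the two different finite element spaces.

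For $\Pi_h$, I would take the scalar Lagrange nodal interpolant $\pi_h$ into continuous piecewise polynomials of degree $m$ and set $\Pi_h \tilde{c} := (\pi_h c_1, \ldots, \pi_h c_n)$. This lands in $X_h$ by construction. Pointwise evaluation is legitimate because $m \geq 1$ and $d \in \{2,3\}$, so $H^{m+1}(\Omega) \hookrightarrow C(\bar{\Omega})$ by Sobolev embedding. The scalar error bound $\|c_i - \pi_h c_i\|_{H^1(\Omega)} \leq C h^m |c_i|_{H^{m+1}(\Omega)}$ then follows in the standard way from the Bramble--Hilbert lemma on a reference simplex together with affine scaling and the shape-regularity of $\mathcal{T}_h$. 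Summing in quadrature over the $n$ components yields the first estimate.

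For $\Lambda_h$ I would use the element-wise $L^2$-orthogonal projection onto $P^{m-1}(K)^d$; because $Q_h$ imposes no inter-element continuity, these local projections glue directly into an element of $Q_h$, so no Clément-type averaging is needed. On each cell, the standard Bramble--Hilbert-plus-scaling argument gives $\|v_i - \lambda_h v_i\|_{L^2(K)} \leq C h^m |v_i|_{H^m(K)}$, and summing the contributions over cells and components produces the second estimate.

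I do not expect any serious obstacle. The only point worth emphasizing is that the inclusion $\Pi_h \tilde{c} \in X_h$ requires continuity across element faces, forcing the use of a nodal interpolant (and hence the Sobolev embedding to justify pointwise values), whereas the inclusion $\Lambda_h \tilde{v} \in Q_h$ tolerates discontinuities and therefore permits a purely local $L^2$ projection. The only notational wrinkle is that the right-hand sides are written with subscript-zero Sobolev spaces; I read this as the usual $H^{m+1}$ and $H^m$ (semi)norms applied componentwise on the product space, under which the classical scalar bounds apply verbatim.
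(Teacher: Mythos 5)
Your construction is correct and matches the content the paper is gesturing at by citing Boffi--Brezzi--Fortin: componentwise nodal Lagrange interpolation for $\Pi_h$, elementwise $L^2$ projection for $\Lambda_h$, Bramble--Hilbert plus affine scaling on a shape-regular mesh for both. One small technical caveat worth recording: the Lagrange interpolant is not defined on all of $X=H^1(\Omega)^n$ for $d\in\{2,3\}$, only on the subspace of continuous functions, so strictly speaking your $\Pi_h$ is a map from $X\cap C(\bar\Omega)^n\to X_h$ rather than from $X$ itself; this is harmless here because the stated bound is void unless $\tilde c\in H^{m+1}$, but if one insists on an operator defined on the whole of $X$ (as the proposition literally asserts) one should take a Scott--Zhang or Cl\'ement quasi-interpolant, which delivers the same $\mathrm{O}(h^m)$ rate and, in the Scott--Zhang variant, also preserves homogeneous Dirichlet traces --- a property the paper uses later when it asserts $\Pi_h\widehat c^{\,k+1}_0\in X_{\Gamma_D,h}$.
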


 Our non-linear iteration scheme in the discrete case is as follows; we take an initial guess $\tilde{c}^{0} \in X_{\tilde{f}}$ which satisfies \eqref{totalconcentrationguess} and then construct 
 $\tilde{c}^{0}_{h} := \Pi_{h} \tilde{c}^{0} \in X_{h}$. The Dirichlet boundary conditions \eqref{Boundarydata2} are typically only satisfied approximately; however we note that, due to linearity of the interpolation operator and equation \eqref{totalconcentrationguess},
 \begin{equation} \label{Interpolationpreserves}
 \sum_{i=1}^{n} c^{0}_{i,h} =  
 \sum_{i=1}^{n} \Pi_{h} c^{0}_{i} = \Pi_{h} c_{\text{T}} = c_{\text{T}},
 \end{equation}
and therefore condition \eqref{DirichletBC consistency} remains enforced.

 For $k=0,1,2,\dots$ the next iterate of the sequence $(\tilde{v}^{k+1}_{h}, \tilde{c}^{k+1}_{h})$ is computed by solving the following linear system: find $(\tilde{v}_{h}^{k+1}, \widehat{c}^{k+1}_{0,h}) \in  Q_{h} \times X_{\Gamma_{D},h} $ such that 
\begin{align} \label{DiscreteLinearsaddlepointsystem1}
&a_{\tilde{c}^{k}_{h}}(\tilde{v}_{h}^{k+1}, \tilde{\tau}_{h}) + b(\tilde{\tau}_{h},  \hat{c}_{0,h}^{k+1}) =l_{\tilde{c}_{h}^{k}}(\tilde{\tau}_{h}) -  b(\tilde{\tau}_{h},  \tilde{c}^{0}_{h}) \:\:\: \forall \: \tilde{\tau}_{h} \in Q_{h}, \\ \label{DiscreteLinearsaddlepointsystem2}
& b_{\tilde{c}^{k}_{h}}( \tilde{v}^{k+1}_{h}, \tilde{w}_{h})  = -(\tilde{r}_{h}, \tilde{w}_{h})_{L^{2}(\Omega)^{n}} + (\tilde{g}, \tilde{w}_{h})_{L^{2}(\Gamma_{N})^{n}} \:\:\: \forall \: \tilde{w}_{h} \in X_{\Gamma_{D},h}.
\end{align}
We then set $\tilde{c}^{k+1}_{h} = \widehat{c}^{k+1}_{0,h} + \tilde{c}^{0}_{h} $ and repeat this until $\|\tilde{c}^{k+1}_{h} - \tilde{c}^{k}_{h} \|_{X} + \|\tilde{v}^{k+1}_{h} - \tilde{v}^{k}_{h} \|_{Q} \leq \varepsilon$ for our tolerance $\varepsilon > 0$.

A distinct advantage of our formulation is that the coercivity condition \eqref{Condition1} and the inf-sup condition \eqref{Condition2} are automatically satisfied with the same constants $\alpha, \beta_{1}, \beta_{2}$. This follows from the fact that the choice of function spaces preserves a crucial structure:
\begin{equation} \label{neededforinfsup}
\text{for any } \tilde{w} \in X_h, \:\nabla \tilde{w} \in Q_{h},
\end{equation} 
which in particular allows us to repeat the proofs of \eqref{Condition1} and \eqref{Condition2} in the discrete setting in exactly the same manner. We thus have the following.
\begin{theorem}
Assume $\gamma >0$ and that $\tilde{c}^{k}_{h}$ satisfies $ c^{k}_{i,h} \geq \kappa > 0$ a.e.~for each $i=1,2,\dots,n$ and a positive constant $\kappa$. Then, under the condition $\Gamma_{D} \neq \emptyset$ or \eqref{Making Neumann problem well posed.}, there exists a unique $(\tilde{v}^{k+1}_{h}, \hat{c}^{k+1}_{0,h}) \in Q_{h} \times X_{\Gamma_{D}, h}$ which solves the system \eqref{DiscreteLinearsaddlepointsystem1}-\eqref{DiscreteLinearsaddlepointsystem2}.
\end{theorem}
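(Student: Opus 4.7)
The plan is to mirror the proof of the continuous well-posedness theorem, exploiting the fact that the discretization is conforming ($Q_h \subset Q$, $X_{\Gamma_D,h} \subset X_{\Gamma_D}$) and that the polynomial degrees have been chosen precisely so that $\nabla X_h \subset Q_h$. First I would verify the discrete coercivity condition: since $a_{\tilde{c}^k_h}(\cdot,\cdot)$ has the same pointwise representation in terms of the augmented transport matrix $\textbf{M}^{\gamma,k}$ evaluated at the discrete iterate $\tilde{c}^k_h$, and since $c^k_{i,h} \geq \kappa > 0$ a.e.\ by hypothesis, Lemma \ref{Positivedefinitelemma} applies and yields $\textbf{M}^{\gamma,k}$ symmetric positive definite a.e., with the lower eigenvalue bound depending only on $\kappa$ via Remark \ref{Scalingremark}. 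The argument of Lemma \ref{Wellposednessconditionlemma}, being entirely pointwise, restricts verbatim to $Q_h$, giving $a_{\tilde{c}^k_h}(\tilde{v}_h, \tilde{v}_h) \geq \alpha \|\tilde{v}_h\|_Q^2$ for all $\tilde{v}_h \in Q_h$ with the same $\alpha$ as in \eqref{Boundonalpha}.

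Next, for the two discrete inf-sup conditions, the essential structural observation \eqref{neededforinfsup} is what makes the proof immediate: given any $\tilde{w}_h \in X_h$, its gradient $\nabla \tilde{w}_h$ is piecewise polynomial of degree $m-1$ and therefore lies in $Q_h$ by \eqref{discretespaceQ}. Choosing this as test function gives
\begin{equation*}
\sup_{\tilde{\tau}_h \in Q_h} \frac{b(\tilde{\tau}_h, \tilde{w}_h)}{\|\tilde{\tau}_h\|_Q} \geq \frac{b(\nabla \tilde{w}_h, \tilde{w}_h)}{\|\nabla \tilde{w}_h\|_Q} = \|\tilde{w}_h\|_{X_{\Gamma_D}},
\end{equation*}
and analogously $b_{\tilde{c}^k_h}(\nabla \tilde{w}_h, \tilde{w}_h) \geq \kappa \|\tilde{w}_h\|_{X_{\Gamma_D}}^2$, using $c^k_{i,h} \geq \kappa$. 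The Poincaré inequality \eqref{poincare} holds for $\tilde{w}_h \in X_{\Gamma_D,h} \subset X_{\Gamma_D}$ (or under \eqref{Making Neumann problem well posed.} applied to the appropriate discrete setting), so both bilinear forms satisfy condition \eqref{Condition2} on the discrete spaces with the same constants $\beta_1, \beta_2$ as in the continuous case.

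Finally, I would repeat the homogenization argument from the continuous theorem: using the ansatz $\tilde{c}^{k+1}_h = \hat{c}^{k+1}_{0,h} + \tilde{c}^0_h$ with $\hat{c}^{k+1}_{0,h} \in X_{\Gamma_D,h}$ recasts \eqref{DiscreteLinearsaddlepointsystem1}--\eqref{DiscreteLinearsaddlepointsystem2} as a saddle point problem over the pair of Hilbert spaces $Q_h \times X_{\Gamma_D,h}$. Invoking Theorem 2.1 of \cite{GeneralSaddlepoint} (or Theorem 3.1 of \cite{Generalisedsaddleerror1982}) with the verified coercivity and inf-sup conditions yields a unique solution $(\tilde{v}^{k+1}_h, \hat{c}^{k+1}_{0,h})$, and setting $\tilde{c}^{k+1}_h = \hat{c}^{k+1}_{0,h} + \tilde{c}^0_h$ recovers the desired unique iterate in $Q_h \times X_{h}$.

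There is no genuine obstacle here; the whole point of the discretization design is that every step of the continuous proof transfers mechanically. If anything, the mildest technical subtlety is confirming that the boundedness of $a_{\tilde{c}^k_h}, b, b_{\tilde{c}^k_h}$ on the discrete spaces is inherited from the continuous case, which follows because the iterates $\tilde{c}^k_h$ satisfy $\sum_i c^k_{i,h} = c_{\mathrm{T}}$ by the structure-preserving property \eqref{Interpolationpreserves}, giving the uniform $L^\infty$ bound needed for boundedness as in Remark \ref{Remarkaboutlinearfunction}.
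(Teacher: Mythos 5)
Your proof is correct and takes exactly the approach the paper does: conformity of $Q_h$ and $X_{\Gamma_D,h}$ plus the inclusion $\nabla X_h\subset Q_h$ mean the pointwise coercivity and inf-sup arguments from Lemmas \ref{Positivedefinitelemma} and \ref{Wellposednessconditionlemma} restrict verbatim to the discrete spaces with the same constants, after which the homogenization ansatz reduces the problem to a generalized saddle point system over Hilbert spaces and \cite[Theorem 2.1]{GeneralSaddlepoint} applies. The paper states this in one sentence by pointing to the structural property \eqref{neededforinfsup}; you have simply spelled out what that sentence compresses.
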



Given the well-posedness of the discretized system, we  proceed to obtain error estimates. However, given that we have the conditions \eqref{Condition1}-\eqref{Condition2} satisfied for the spaces $Q_{h}$ and $X_{h}$, we can use a known result for generalized saddle point systems \cite[Theorem 4.1]{Generalisedsaddleerror1982} to deduce the following.
\begin{theorem} \label{Galerkinorthogonality estimate}
 There exist constants $L_{1}, L_{2}$ depending only on $\alpha, \beta_{1},\beta_{2}, \Omega$ such that
\begin{align}
& \| \widehat{c}^{k+1}_{0} - \widehat{c}^{k+1}_{0,h} \|_{X_{\Gamma_{D}}} \leq L_{1} \Big ( \underset{\tilde{w}_{h} \in X_{\Gamma_{D}, h}}{\inf} \| \widehat{c}^{k+1}_{0} - \tilde{w}_{h} \|_{X} + \underset{\tilde{\tau}_{h} \in Q_{h}}{\inf}  \| \tilde{v}^{k+1} - \tilde{\tau}_{h} \|_{Q} \Big ), \\
& \| \tilde{v}^{k+1} -\tilde{v}^{k+1}_{h} \|_{Q} \leq L_{2}  \Big ( \underset{\tilde{w}_{h} \in X_{\Gamma_{D},h}}{\inf} \| \widehat{c}^{k+1}_{0} - \tilde{w}_{h} \|_{X} + \underset{\tilde{\tau}_{h} \in Q_{h}}{\inf}  \| \tilde{v}^{k+1} - \tilde{\tau}_{h} \|_{Q} \Big ).
\end{align}
\end{theorem}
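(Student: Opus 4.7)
The plan is to invoke the abstract error estimate for generalized saddle point problems, namely \cite[Theorem 4.1]{Generalisedsaddleerror1982}, directly on the pair of discrete spaces $Q_h \times X_{\Gamma_D, h}$. That theorem produces exactly the best-approximation bounds claimed here, provided three ingredients are in place: boundedness of the bilinear forms involved, a uniform discrete coercivity condition analogous to \eqref{Condition1}, and uniform discrete inf-sup conditions analogous to \eqref{Condition2}. Because $Q_h \subset Q$ and $X_{\Gamma_D, h} \subset X_{\Gamma_D}$, the Galerkin orthogonality between \eqref{HomoLinearsaddlepointsystem1}--\eqref{HomoLinearsaddlepointsystem2} and \eqref{DiscreteLinearsaddlepointsystem1}--\eqref{DiscreteLinearsaddlepointsystem2} is automatic (up to possibly having $\tilde r_h$ differ from $\tilde r$, which the abstract theorem accommodates as a consistency perturbation that vanishes in our conforming setting).

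First, I would note that boundedness of $a_{\tilde{c}^k}(\cdot,\cdot)$, $b(\cdot,\cdot)$ and $b_{\tilde{c}^k}(\cdot,\cdot)$ on $Q\times Q$, $Q\times X$ and $Q\times X$ respectively has already been recorded in Remark \ref{Remarkaboutlinearfunction}; in particular, since the discrete iterate satisfies $\sum_i c^k_{i,h} = c_{\text{T}}$ by \eqref{Interpolationpreserves} and is bounded below by $\kappa$, the entries of $\textbf{M}^{\gamma,k}$ built from $\tilde c^k_h$ are essentially bounded pointwise, so the bilinear forms are bounded with constants depending only on $\kappa$, $c_{\text{T}}$ and the diffusivities.

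The central observation is that the structural property \eqref{neededforinfsup}, $\nabla \tilde{w}_h \in Q_h$ for every $\tilde{w}_h \in X_h$, built into the choice of the discrete spaces \eqref{discretespaceQ}--\eqref{discretespaceX0}, allows the proofs of \eqref{Condition1} and \eqref{Condition2} given in Lemma \ref{Wellposednessconditionlemma} to be repeated verbatim on the discrete spaces and to yield \emph{the same constants} $\alpha, \beta_1, \beta_2$. Concretely, the coercivity estimate $a_{\tilde{c}^k_h}(\tilde v_h, \tilde v_h) \geq \alpha \|\tilde v_h\|_Q^2$ is pointwise and relies only on positive-definiteness of $\textbf{M}^{\gamma,k}$ almost everywhere, which Lemma \ref{Positivedefinitelemma} provides under the hypothesis $c^k_{i,h} \geq \kappa$. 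For the inf-sup conditions, the test field $\tilde{\tau}_h := \nabla \tilde{w}_h$ used in Lemma \ref{Wellposednessconditionlemma} lies in $Q_h$ precisely by \eqref{neededforinfsup}, whence the Poincar\'e inequality \eqref{poincare} delivers the same $\beta_1, \beta_2$.

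With discrete boundedness, coercivity, and both inf-sup conditions available with $h$-independent constants, \cite[Theorem 4.1]{Generalisedsaddleerror1982} applied to the system \eqref{HomoLinearsaddlepointsystem1}--\eqref{HomoLinearsaddlepointsystem2} versus \eqref{DiscreteLinearsaddlepointsystem1}--\eqref{DiscreteLinearsaddlepointsystem2} yields the two claimed estimates, with $L_1, L_2$ expressible as rational functions of $\alpha, \beta_1, \beta_2$ and the boundedness constants, hence dependent only on $\kappa$, the data, and $\Omega$. The one step that requires genuine care, and which I expect to be the main obstacle, is the uniformity of the discrete inf-sup constants: without the compatibility \eqref{neededforinfsup} between $X_h$ and $Q_h$ one would need an independent Fortin-operator construction, and the constants could degrade with $h$. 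The authors' $P^m/P^{m-1}$ pairing is precisely what sidesteps this difficulty and lets the continuous analysis transfer to the discrete setting without modification.
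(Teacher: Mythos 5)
Your proposal is correct and follows essentially the same route as the paper: establish the discrete coercivity and inf-sup conditions by noting that the compatibility \eqref{neededforinfsup} lets the continuous arguments of Lemma \ref{Wellposednessconditionlemma} transfer verbatim with unchanged constants, then invoke \cite[Theorem 4.1]{Generalisedsaddleerror1982}. Your added remarks on boundedness and the role of $\tilde r_h$ are useful elaborations of a point the paper states only briefly, but they do not change the underlying argument; one caveat worth noting, which both you and the paper gloss over, is that strict Galerkin orthogonality requires the continuous and discrete problems to be linearized at the same coefficient (i.e.\ treating $\tilde c^k_h$ as the common linearization point), since $a_{\tilde c^k}$ and $a_{\tilde c^k_h}$ are otherwise distinct bilinear forms.
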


We have by the Poincar\'e inequality \eqref{poincare} and Proposition \ref{errorestimateproposition}, for some constants $C_{p}, C_{1}$,
\begin{align}
& \|\tilde{c}^{k+1} - \tilde{c}^{k+1}_{h} \|_{X}  \leq  \|\widehat{c}^{k+1}_{0} - \widehat{c}^{k+1}_{0,h} \|_{X}+ \|\tilde{c}^{0} - \tilde{c}^{0}_{h} \|_{X}, \\
& \leq C^{-1}_{p} \|\widehat{c}^{k+1}_{0} - \widehat{c}^{k+1}_{0,h} \|_{X_{\Gamma_{D}}}+ C_{1} h^{m} \| \tilde{c}^{0} \|_{H^{m+1}_{0}(\Omega)}.
\end{align}
Therefore, noting that $\Pi_{h} \widehat{c}^{k+1}_{0} \in X_{\Gamma_{D},h}$, we can combine
 Theorem \ref{Galerkinorthogonality estimate} and Proposition \ref{errorestimateproposition} to deduce the following corollary.

\begin{corollary}
 There exist constants $\bar{C}_{1}, \bar{C}_{2}$ depending only on $\alpha, \beta_{1},\beta_{2}, \Omega$ such that
\begin{align}
& \| \tilde{c}^{k+1} - \tilde{c}^{k+1}_{h} \|_{X} \leq \bar{C}_{1} h^{m}  \Big ( \| \tilde{c}^{0} \|_{H^{m+1}_{0}(\Omega)^{n}}+\| \hat{c}^{k+1}_{0} \|_{H^{m+1}_{0}(\Omega)^{n}} +\| \tilde{v}^{k+1} \|_{(H^{m}_{0}(\Omega)^{d})^{n}} \Big ), \\
& \| \tilde{v}^{k+1} -\tilde{v}_{h}^{k+1} \|_{Q} \leq \bar{C}_{2}  h^{m} \Big ( \| \tilde{c}^{0} \|_{H^{m+1}_{0}(\Omega)^{n}} +\| \hat{c}^{k+1}_{0} \|_{H^{m+1}_{0}(\Omega)^{n}}+  \| \tilde{v}^{k+1} \|_{(H^{m}_{0}(\Omega)^{d})^{n}} \Big ).
\end{align}
\end{corollary}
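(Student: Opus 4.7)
The plan is to derive the two estimates by a straightforward triangle inequality argument, splitting $\tilde{c}^{k+1} - \tilde{c}^{k+1}_h$ into the ``homogenized'' part and the initial-guess part, and then invoking Theorem \ref{Galerkinorthogonality estimate} together with the interpolation estimates of Proposition \ref{errorestimateproposition} to bound each piece.

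First, I would write
\begin{equation*}
\tilde{c}^{k+1} - \tilde{c}^{k+1}_h = \bigl(\widehat{c}^{k+1}_0 - \widehat{c}^{k+1}_{0,h}\bigr) + \bigl(\tilde{c}^0 - \tilde{c}^0_h\bigr),
\end{equation*}
and apply the triangle inequality in $\|\cdot\|_X$. The first summand is bounded in $\|\cdot\|_X$ via the Poincar\'e inequality \eqref{poincare}, converting it to the $\|\cdot\|_{X_{\Gamma_D}}$ norm; the second is controlled directly by Proposition \ref{errorestimateproposition} since $\tilde{c}^0_h = \Pi_h \tilde{c}^0$, giving $\|\tilde{c}^0 - \tilde{c}^0_h\|_X \leq C_1 h^m \|\tilde{c}^0\|_{H^{m+1}_0(\Omega)^n}$. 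This already accounts for the $\|\tilde{c}^0\|_{H^{m+1}_0(\Omega)^n}$ term on the right-hand side.

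Next, I would apply Theorem \ref{Galerkinorthogonality estimate} to estimate $\|\widehat{c}^{k+1}_0 - \widehat{c}^{k+1}_{0,h}\|_{X_{\Gamma_D}}$ and (for the second inequality of the corollary) $\|\tilde{v}^{k+1} - \tilde{v}^{k+1}_h\|_Q$ by the two infima
\begin{equation*}
\inf_{\tilde{w}_h \in X_{\Gamma_D,h}} \|\widehat{c}^{k+1}_0 - \tilde{w}_h\|_X \quad\text{and}\quad \inf_{\tilde{\tau}_h \in Q_h} \|\tilde{v}^{k+1} - \tilde{\tau}_h\|_Q.
\end{equation*}
To convert these infima into explicit $h^m$ rates, I would insert the particular choices $\tilde{w}_h = \Pi_h \widehat{c}^{k+1}_0$ and $\tilde{\tau}_h = \Lambda_h \tilde{v}^{k+1}$; the first lies in $X_{\Gamma_D,h}$ because $\widehat{c}^{k+1}_0 \in X_{\Gamma_D}$ and the Lagrange interpolant preserves zero Dirichlet data. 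Proposition \ref{errorestimateproposition} then bounds these two quantities by $C_1 h^m \|\widehat{c}^{k+1}_0\|_{H^{m+1}_0(\Omega)^n}$ and $C_2 h^m \|\tilde{v}^{k+1}\|_{(H^m_0(\Omega)^d)^n}$ respectively.

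Collecting the three pieces yields the first estimate with $\bar{C}_1 = \max\{C_p^{-1} L_1 \max(C_1,C_2),\, C_1\}$; the second estimate of the corollary follows in the same way from Theorem \ref{Galerkinorthogonality estimate} using the constant $L_2$, with $\bar{C}_2 = L_2 \max(C_1,C_2)$. There is no genuine obstacle here: the argument is a mechanical assembly, and the only point that requires any care is verifying that the chosen interpolants lie in the correct (trial/test) discrete spaces --- in particular that $\Pi_h$ preserves the homogeneous Dirichlet trace on $\Gamma_D$ --- so that they are admissible competitors in the two infima supplied by Theorem \ref{Galerkinorthogonality estimate}.
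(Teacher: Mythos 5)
Your proposal is correct and follows essentially the same route as the paper: triangle inequality through the decomposition $\tilde{c}^{k+1}-\tilde{c}^{k+1}_h = (\widehat{c}^{k+1}_0 - \widehat{c}^{k+1}_{0,h}) + (\tilde{c}^0 - \tilde{c}^0_h)$, Poincar\'e to pass from $\|\cdot\|_X$ to $\|\cdot\|_{X_{\Gamma_D}}$, then Theorem \ref{Galerkinorthogonality estimate} combined with Proposition \ref{errorestimateproposition} applied to the admissible competitors $\Pi_h \widehat{c}^{k+1}_0$ and $\Lambda_h \tilde{v}^{k+1}$. Your observation that one must check $\Pi_h$ preserves the homogeneous trace is precisely the point the paper flags with ``noting that $\Pi_h \widehat{c}^{k+1}_0 \in X_{\Gamma_D,h}$''.
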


For example, if we choose $m=1$ then we have
\begin{equation}
\|\tilde{v}^{k+1} - \tilde{v}_{h}^{k+1} \|_{Q} + \| \tilde{c}^{k+1} - \tilde{c}_{h}^{k+1} \|_{X} = {\rm O} (h).
\end{equation}
In the next section we will observe that actually  $\| \tilde{c}^{k+1} - \tilde{c}_{h}^{k+1} \|_{L^{2}(\Omega)^{n}}  = {\rm O} (h^{m+1})$. Thus it is likely that one can use duality methods to improve the error estimate in the $L^{2}$ norm of $\tilde{c}$.

\begin{remark}
It can be observed in the proof of \cite[Theorem 4.1]{Generalisedsaddleerror1982} that the constants $L_{1}, L_{2}$ appearing in Theorem \ref{Galerkinorthogonality estimate} scale as ${\rm O}(\alpha^{-1})$. Therefore from \eqref{Boundonalpha} and the scaling argument in Remark \ref{Scalingremark} we then see that the constants $\bar{C}_{1}, \bar{C}_{2}$ will scale as ${\rm O}(\kappa^{-1})$.
\end{remark}

The Gibbs--Duhem equation is preserved up to machine-precision as can observed by the following argument. Replacing $c_{\text{T}}$ with $c_{\text{T},h}$ we can reproduce the argument of section $3$ and derive the equivalent of equation 
\eqref{GDenforced};
\begin{equation}
\int_{\Omega} |\nabla c_{\text{T},h}|^{2}=0.
\end{equation}
Combining this with \eqref{Interpolationpreserves} we see that $c_{\text{T},h}=C_{\text{T}}$, where $C_{\text{T}}$ is determined by either \eqref{DirichletBC consistency} or \eqref{Making Neumann problem well posed.}. This calculation does not use any approximation based on the mesh size.

\section{Numerical results}

 Two numerical simulations were implemented with our method.  The discretization was implemented using the Firedrake software \citep{Rathgeber2016} and PETSc \citep{petsc-user-ref, petsc-efficient, Dalcin2011,Chaco95}. The arising linear systems were solved using MUMPS \citep{MUMPS01, MUMPS02}.

\subsection{Numerical example one: Manufactured solution} 

We first consider a test case on $\Omega = [0,1]^{2}$ for which the solution is analytically known in order to validate the error estimates of section $5$. 

 For $n=4$ the family of manufactured solutions is constructed as follows. For $j=1,2$ let $k_{j}(\cdot): 
\Omega \rightarrow \mathbb{R}$ be a differentiable function with a strict bound $|k_{j}|< K_{j}$ for a positive constant $K_{j}$.  We set
\begin{align*}
& c_{1} = k_{1}+K_{1}, \:\:\:\: c_{2} = -k_{1} + K_{1},\\
& c_{3} = k_{2}+K_{2} , \:\:\:\:  c_{4} = -k_{2} + K_{2}.
\end{align*}

 We further assume that
\begin{equation}
\mathcal{D}_{13}=\mathcal{D}_{14}=\mathcal{D}_{24}=\mathcal{D}_{23}.
\end{equation}
Then for any given mass-flux $u \in L^{2}(\Omega)^{d}$ an exact solution is given when
\begin{align*}
& v_{1} = -\frac{2}{RT} \Big ( \frac{K_{1}}{\mathcal{D}_{12}} + \frac{K_{1}}{\mathcal{D}_{13}} 
\Big ) \nabla \ln c_{1} + \frac{u}{c_{\text{T}}}, \:\:\:\:  v_{2} = -\frac{c_{1}}{c_{2}} v_{1} + \frac{u}{c_{\text{T}}},  \\
& v_{3} = - -\frac{2}{RT} \Big ( \frac{K_{2}}{\mathcal{D}_{34}} + \frac{K_{1}}{\mathcal{D}_{31}} 
\Big ) \nabla \ln c_{3}+ \frac{u}{c_{\text{T}}}, \:\:\:\: v_{4} = -\frac{c_{3}}{c_{4}} v_{3} + \frac{u}{c_{\text{T}}},
\end{align*}
 and, for $i=1,2,3,4$,
\begin{equation}
r_{i} =  
\text{div} \big ( c_{i} v_{i} \big ).
\end{equation}
 We then choose $M_{i} = 1$ for $i=1,2,3,4$ so that the mass-flux constraint \eqref{mass-flux constraint} is satisfied. 
 
For this numerical experiment we take $RT=1 $ and 
\begin{equation}
k_{1}(x,y) = \frac{1}{2} \exp(8xy(1-y)(1-x)), \:\:\:\: k_{2}(x,y) = \frac{1}{2} \sin(\pi x ) \sin (\pi y);
\end{equation}
 we can then take $K_{1}=K_{2}=1$. We then have $c_{\text{T}}=4$. For $i=1,2,3,4$ we pose the Dirichlet boundary conditions
\begin{equation}
c_{i} = 1, \:\:\: \text{on} \:\: \partial \Omega,
\end{equation}
and set the mass-flux $u = (0, 1)^\top$.

The diffusion coefficients are chosen as $\mathcal{D}_{12}= \mathcal{D}_{21}= 2$, $\mathcal{D}_{34} = \mathcal{D}_{43} = 3$ and all other diffusion coefficients set to $1$. We take $m=1$ for the discrete spaces \eqref{discretespaceQ}-\eqref{discretespaceX}.
For our initial guess we choose $c^{0}_{i} = c_{i,h}^{0}=1$ for $i=1,2,3,4$. We then proceed with the iteration detailed in section 5 and compute the sequence $(\tilde{v}^{k+1}_{h}, \tilde{c}_{h}^{k+1})$ until,

\begin{equation} \label{Tolerancecriteria}
 \|\tilde{c}^{k+1}_{h} - \tilde{c}^{k}_{h} \|_{X} + \|\tilde{v}^{k+1}_{h} - \tilde{v}^{k}_{h} \|_{Q} \leq \varepsilon,
\end{equation}
and for this $k$ we set $(\tilde{v}, \tilde{c} ) = (\tilde{v}^{k+1}, \tilde{c}^{k+1})$. In this experiment we took $\varepsilon= 10^{-13}$ and $\gamma =1$. The resulting concentration profile and velocity vector field for species 1 are plotted in Figure \ref{2D plot}. 

\begin{figure} 
\begin{subfigure}[t]{.55\textwidth}  
\centering
\includegraphics[width=1\linewidth]{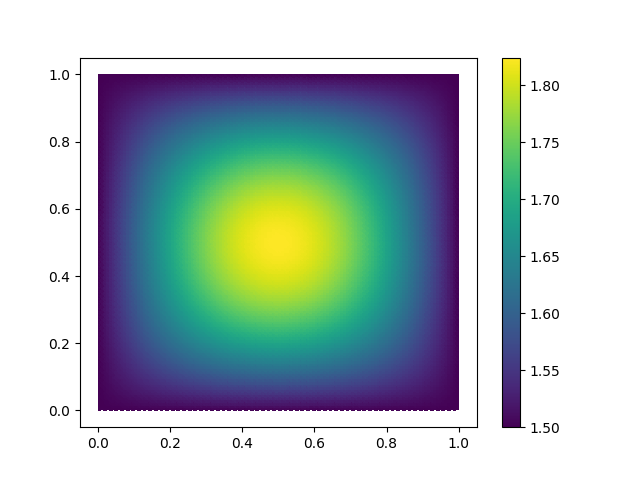}
\end{subfigure}
\hfill
\hfill
\hfill
\hfill
\begin{subfigure}[t]{.55\textwidth} 
\centering
\includegraphics[width=1\linewidth]{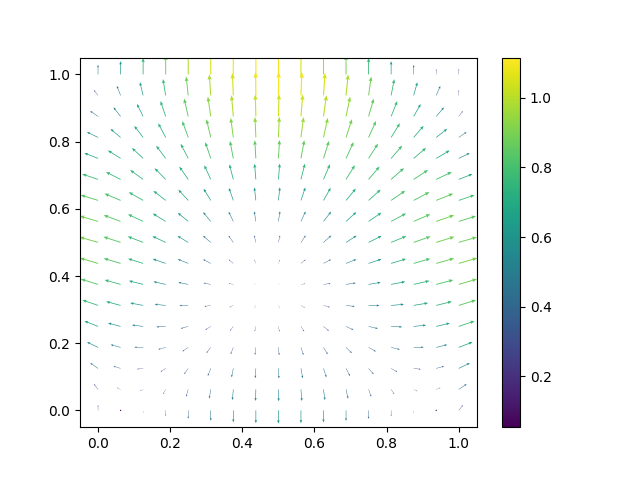}
\end{subfigure}
\caption{Concentration of species 1 (left) and its velocity vector field (right). The colour bar on the vector field plot denotes magnitude.}
 \label{2D plot} \end{figure}
 To analyse the rate of convergence we define the three errors
 \begin{align}
 & E_{1} =\Big ( \sum_{j=1}^{n} \|c_{j} - c_{j,h} \|_{L^{2}(\Omega)}^{2}\Big)^{\frac{1}{2}}, \\ 
 & E_{2} = \Big ( \sum_{j=1}^{n} \|\nabla c_{j} - \nabla c_{j,h} \|^{2}_{L^{2}(\Omega)^{d}}\Big)^{\frac{1}{2}}, \\ 
 & E_{3} = \Big( \sum_{j=1}^{n} \|v_{j} - v_{j,h} \|^{2}_{L^{2}(\Omega)^{d}} \Big )^{\frac{1}{2}}, 
\end{align}
and the error in the mass-flux
\begin{equation}
  E_{4} = \Big \| \sum_{j=1}^{n} M_{j} c_{j} v_{j} - u \Big \|_{L^{2}(\Omega)^{d}}.
 \end{equation}

According to Proposition \ref{errorestimateproposition}, $E_{j} = {\rm O}(h)$ for $i=1,2,3$. This is validated on the log-log error plot displayed in Figure  \ref{Log-Log-error}. We also observe that $E_{4} = {\rm O}(h)$.
%
%

%
%
%
%

\begin{center}
\centering 
\includegraphics[width=0.7 \hsize ]{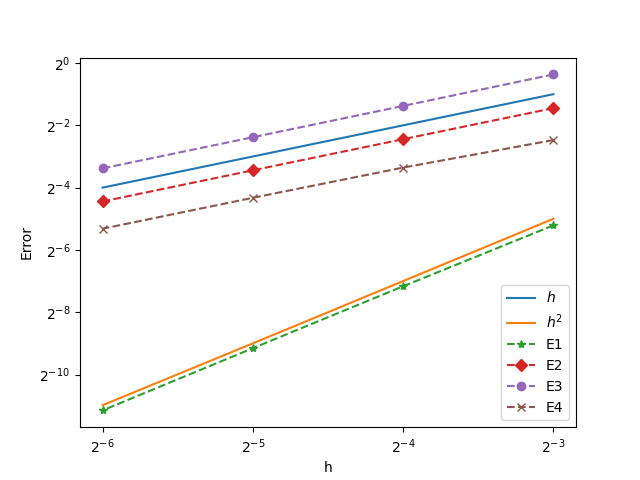}
\captionof{figure}{log-log error plot} 
\label{Log-Log-error}
\end{center}
Note that we actually observe that $E_{1} = {\rm O}(h^{2})$. This suggests that if one developed a duality-based error estimate for generalized saddle point systems, the error estimate on $E_{1}$ could be improved. 

Our discretization also preserves the Gibbs--Duhem relationship up to machine precision, independent of the mesh size.  The relevant values are tabulated in Table \ref{tab:gibbs--duhem}. 

\begin{table}[h!]
  \begin{center}
    \begin{tabular}{c |c |c} 
      \text{Mesh size} & \text{Non-linear iterations} & $\| \nabla c_{\text{T}} \|_{L^{2}(\Omega)^{d}}$   \\
      \hline
      $8 \times 8$  & 11 & $<10^{-14}$  \\
      $16 \times 16 $ & 11 &$<10^{-14}$  \\
      $32 \times 32$  & 11 & $
<10^{-14}$ \\
      $64 \times 64$ & 11 & $<10^{-14}$  
    \end{tabular}
  \end{center}
  \caption{The Gibbs--Duhem relationship is preserved regardless of mesh size}
  \label{tab:gibbs--duhem}
\end{table}

\raggedbottom

\subsection{Numerical example two: Diffusion of oxygen and effusion of carbon dioxide in the lungs}
If treated as a steady diffusion process, mass transport in the bronchi within the lungs involves simultaneous ingress of oxygen and egress of carbon dioxide. Moreover, the air through which these species diffuse also contains nitrogen and water vapour. For most modelling purposes, it is not necessary to distinguish among the various constituents of air, but in lung modelling we are interested in the distributions of both the oxygen consumed and carbon dioxide produced by the body, as well as the relative humidity along their diffusion paths. The concentrations of these compounds throughout the lungs has been modelled using the Stefan--Maxwell equations in \cite{Boudin2010} and \cite{CHANG1975109}.
For this example we solve for the mole fraction $y_{i} = c_{i} /c_{\text{T}}$. Mathematically this is the same as normalising the total concentration to $1$. As $c_{\text{T}}$ is a constant in this setting, this does not change the weak formulation or the algorithm. We take the mass-flux, $u$, as zero, and thus consider purely diffusional forces.
For a realistic lung model it would be necessary to model the transient dynamics as well as the convective forces and pressure-driven elastic expansion, but this example suffices to illustrate the time-averaged multispecies transport physics.

This simulation was computed on the mesh shown in Figure \ref{Lungmeshfigure}. The surface mesh was provided by C.~Geuzaine and J.~F.~Remacle \citep{Lungmeshsurface1, Lungmeshsurface2}, and from this the 3D mesh was constructed using the software MeshMixer \citep{Meshmixer} and Gmsh \citep{GMSH}. The mesh consisted of $115609$ vertices and $404174$ elements.

\begin{center}
\centering 
\includegraphics[width=0.7 \hsize ]{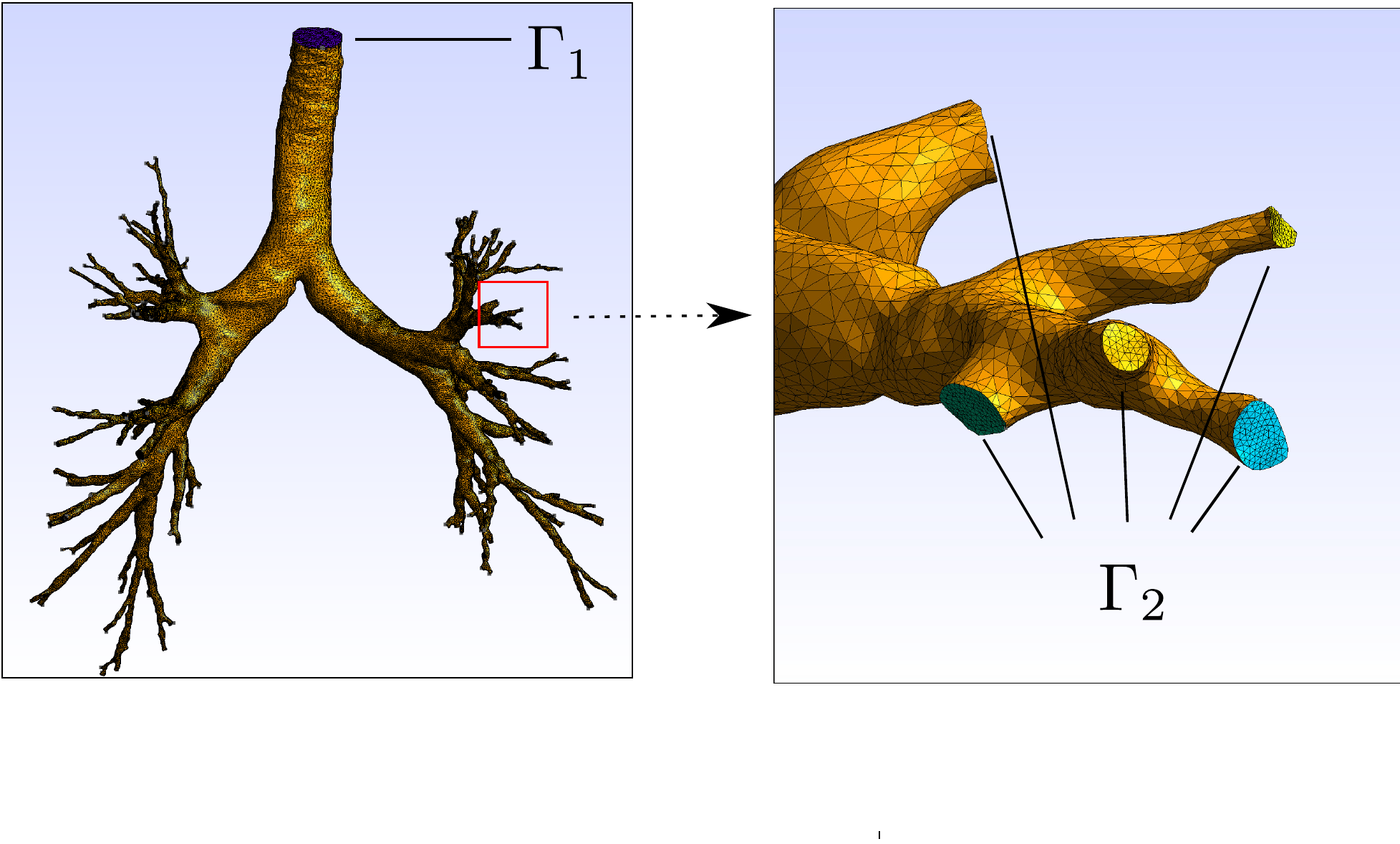}
\captionof{figure}{Mesh of the void space within the lungs at ambient pressure. The surface $\Gamma_{1}$ denotes the inlet at the trachea; the surface $\Gamma_{2}$ is a grouping of all the surfaces at the end of the tertiary bronchi.} 
\label{Lungmeshfigure}
\end{center}

Following the two-dimensional numerical experiments performed in \cite{Boudin2010}, we take mixed Neumann-Dirichlet boundary conditions. At the inlet of the trachea, $\Gamma_{1}$, and at the end of the tertiary bronchi, $\Gamma_{2}$, we set the Dirichlet boundary data to the compositions of humidified air and alveolar air respectively. For the remaining boundary region we set homogeneous Neumann (no-flux) conditions. The Stefan--Maxwell coefficients and the boundary data for this experiment, both taken from \cite{Boudin2010}, are tabulated in Tables \ref{Stefan--Maxwell coefficients} and \ref{BCtable}.

\begin{table}[h!]
    \caption{Values of the Stefan--Maxwell diffusion coefficients $\mathcal{D}_{ij}$ between species (\si{mm^{2}.s^{-1})}}
  \begin{center}
    \begin{tabular}{c |c c c  c} 
      \text{Species} & \ce{N2} & \ce{O2} & \ce{CO2} & \ce{H2O} \\
      \hline
       
      \ce{N2} &  & 21.87 & 16.63 & 23.15 \\
      \ce{O2} & 21.87 &  & 16.40 & 22.85  \\
      \ce{CO2} & 16.63 &16.40 & &  16.02 \\
      \ce{H2O}  & 23.15 & 21.87 & 16.02  \\
      \end{tabular}
      \end{center}
  		\label{Stefan--Maxwell coefficients}
      \end{table}

\begin{table}[h!]
    \caption{Dirichlet boundary data at the entrance of the trachea ($\Gamma_{1}$) and the end of the tertiary bronchi ($\Gamma_{2}$). Note that the air is humidified such that the water vapour mole fraction is equal at both $\Gamma_{1}$, $\Gamma_{2}$ }
  \begin{center}
    \begin{tabular}{c |c c c  c} 
       & \ce{N2} & \ce{O2} & \ce{CO2} & \ce{H2O} \\
      \hline
       
      Mole fraction at $\Gamma_{1}$ & 0.7409 & 0.1967 & 0.0004 & 0.0620 \\
      Mole fraction at $\Gamma_{2}$ & 0.7490 & 0.1360 & 0.0530 & 0.0620  \\
      \end{tabular}
      \end{center}
  	  \label{BCtable}
      \end{table}
      
  As there are no reactions among the species in the lung, we have $r_{i} = 0$ for each $i=1,2,3,4$. The solving parameters were set as $\varepsilon = 10^{-11}$ and $\gamma = 1$. Following our algorithm from section $5$, convergence was achieved in $12$ non-linear iterations. Each linear system had $5,312,524$ degrees of freedom and was solved on 12 cores. We remark that despite the very low concentration of carbon dioxide at $\Gamma_{1}$,  convergence was achieved in few iterations, and the mole fraction remained positive across all iterations.
  
\begin{center}
\centering  
\includegraphics[width=0.85 \hsize ]{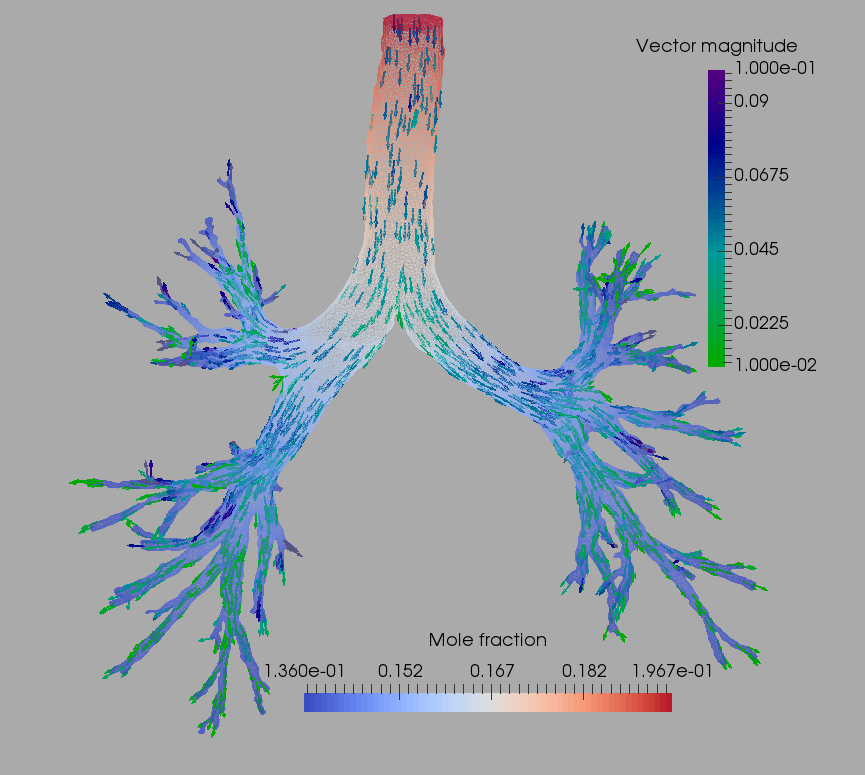}
\captionof{figure}{A  plot of the distribution of oxygen in the lungs with its velocity vector field (\si{mm.s^{-1}}).} 
\label{Oxygenprofile}
\end{center}

 Interesting physical effects are revealed by the diffusional drag forces in the water vapour. Since the mole fractions for water vapour on the boundaries $\Gamma_{1}$ and $\Gamma_{2}$ are the same, any concentration gradient of water vapour is a consequence of diffusional interactions with the other species.

Figure \ref{Waterprofile} shows modest uphill diffusion of water vapour at the trachea, where the velocity points in the same direction as the mole-fraction gradient. This can be explained as follows. The difference in the mole fractions of oxygen and carbon dioxide between the trachea and the tertiary bronchi creates a strong mole-fraction gradient, which in turn drives the velocity fields of the respective species in opposing directions. These velocity fields interact with the water vapour and attempt to drag the water vapour along with them, but the diffusional drag force exerted by \ce{CO2} on \ce{H2O} exceeds the drag by \ce{O2} on \ce{H2O}. Consequently, the water vapour tends to be dragged along with the carbon dioxide --- the \ce{H2O} velocity flows up the trachea.

\begin{center}
\centering  
\includegraphics[width=0.85 \hsize ]{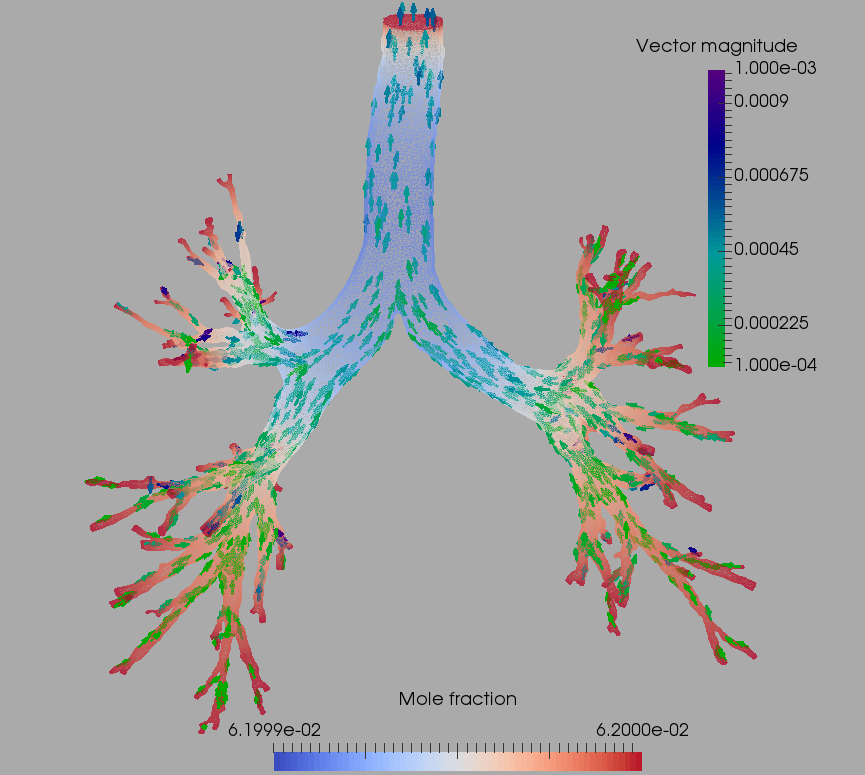}
\captionof{figure}{A plot of the distribution of water vapour in the lungs with its velocity vector field (\si{mm.s^{-1}}).} 
\label{Waterprofile}
\end{center}

 \subsection{Code availability}
 For  reproducibility,  the exact software versions used to produce the results in this paper, along with instructions for installation, has been archived at \url{https://zenodo.org/record/3860438}. The exact scripts used to produce each numerical experiment can be found at \url{https://bitbucket.org/AlexanderVanBrunt/maxwell-stefan-diffusion-equations-repository} along with the mesh used for the lungs.

\section{Conclusion}

 We derived a structure-preserving discretization of the steady-state Stefan--Maxwell diffusion problem based on an augmented saddle point formulation. The inf-sup conditions for the linearized continuous and discrete systems fundamentally rely on the symmetric positive definite structure of an augmented transport matrix, which follows from thermodynamical principles and the construction of the augmentation involving the mass-flux. 
Error estimates for the general case of $n$ species were then deduced, which were confirmed with numerical experiments.

This work considers idealized assumptions; many real-world applications require the relaxation of these assumptions. Future work will likely involve incorporate solving for momentum and including more complex driving forces. We hope that the results presented in this paper for the idealized case can provide guiding principles for a more general setting.

\section{Acknowledgements}

This work was supported by the Engineering and Physical Sciences Research Council Centre for Doctoral Training in Partial Differential Equations: Analysis and Applications (EP/L015811/1), Engineering and Physical Sciences Research Council (EP/R029423/1); the Clarendon fund scholarship; and the Faraday institution SOLBAT project and Multiscale Modelling projects, (subawards FIRG007 and FIRG003 under grant EP/P003532/1).  The authors would also like to thank C.~Geuzaine and J.~F.~Remacle for providing the surface of the mesh used in the second numerical example. 
\bibliographystyle{IMANUM-BIB}

\bibliography{Augmented-saddle-point-Maxwell-StefanArXiv}

\begin{thebibliography}{}

\bibitem[Abdullah \& Das(2007)Abdullah \& Das]{ABDULLAH20075821}
{\sc Abdullah, N. \& Das, D.} (2007)
\newblock Modelling nutrient transport in hollow fibre membrane bioreactor for
  growing bone tissue with consideration of multi-component interactions.
\newblock {\em Chemical Engineering Science\/}, {\bf 62}, 5821--5839.

\bibitem[Amestoy {\em et~al.}(2001)Amestoy, Duff, L'Excellent, \&
  Koster]{MUMPS01}
{\sc Amestoy, P.~R., Duff, I.~S., L'Excellent, J.-Y. \& Koster, J.} (2001)
\newblock A fully asynchronous multifrontal solver using distributed dynamic
  scheduling.
\newblock {\em SIAM Journal on Matrix Analysis and Applications\/}, {\bf 23},
  15--41.

\bibitem[Amestoy {\em et~al.}(2006)Amestoy, Guermouche, L'Excellent, \&
  Pralet]{MUMPS02}
{\sc Amestoy, P.~R., Guermouche, A., L'Excellent, J.-Y. \& Pralet, S.} (2006)
\newblock Hybrid scheduling for the parallel solution of linear systems.
\newblock {\em Parallel Computing\/}, {\bf 32}, 136--156.

\bibitem[Baehr \& Bruell(1990)Baehr \& Bruell]{Arthur1990}
{\sc Baehr, A.~L. \& Bruell, C.~J.} (1990)
\newblock {Application of the Stefan-Maxwell equations to determine limitations
  of Fick's law when modeling organic vapor transport in sand columns}.
\newblock {\em Water Resources Research\/}, {\bf 26}, 1155--1163.

\bibitem[Balay {\em et~al.}(1997)Balay, Gropp, McInnes, \&
  Smith]{petsc-efficient}
{\sc Balay, S., Gropp, W.~D., McInnes, L.~C. \& Smith, B.~F.} (1997)
\newblock {Efficient management of parallelism in object oriented numerical
  software libraries}.
\newblock {\em Modern Software Tools in Scientific Computing\/} (E.~Arge, A.~M.
  Bruaset \& H.~P. Langtangen eds).
\newblock Birkh{\"{a}}user Press, pp. 163--202.

\bibitem[Balay {\em et~al.}(2019)Balay, Abhyankar, Adams, Brown, Brune,
  Buschelman, Dalcin, Eijkhout, Gropp, Karpeyev, Kaushik, Knepley, May,
  McInnes, Mills, Munson, Rupp, Sanan, Smith, Zampini, Zhang, \&
  Zhang]{petsc-user-ref}
{\sc Balay, S., Abhyankar, S., Adams, M.~F., Brown, J., Brune, P., Buschelman,
  K., Dalcin, L., Eijkhout, V., Gropp, W.~D., Karpeyev, D., Kaushik, D.,
  Knepley, M.~G., May, D.~A., McInnes, L.~C., Mills, R.~T., Munson, T., Rupp,
  K., Sanan, P., Smith, B.~F., Zampini, S., Zhang, H. \& Zhang, H.} (2019)
\newblock {PETS}c users manual.
\newblock {\em Technical Report} ANL-95/11 - Revision 3.11.
\newblock Argonne National Laboratory.

\bibitem[Bizeray {\em et~al.}(2016)Bizeray, Howey, \&
  Monroe]{MonroeNernstPlanck2016}
{\sc Bizeray, A., Howey, D. \& Monroe, C.} (2016)
\newblock Resolving a discrepancy in diffusion potentials, with a case study
  for {Li-Ion} batteries.
\newblock {\em Journal of The Electrochemical Society\/}, {\bf 163},
  E223--E229.

\bibitem[Bochev \& Lehoucq(2006)Bochev \& Lehoucq]{Bochev2006}
{\sc Bochev, P.~B. \& Lehoucq, R.} (2006)
\newblock Regularization and stabilization of discrete saddle-point variational
  problems.
\newblock {\em Electronic Transactions on Numerical Analysis\/}, {\bf 22},
  97--113.

\bibitem[Boffi {\em et~al.}(2013)Boffi, Brezzi, \& Fortin]{boffi2013mixed}
{\sc Boffi, D., Brezzi, F. \& Fortin, M.} (2013)
\newblock {\em {Mixed Finite Element Methods and Applications}\/}.
\newblock Springer Series in Computational Mathematics.
\newblock Berlin Heidelberg: Springer.

\bibitem[Bothe(2010)Bothe]{Bothe2010}
{\sc Bothe, D.} (2010)
\newblock {On the Maxwell-Stefan approach to multicomponent diffusion}.
\newblock {\em Progress in Nonlinear Differential Equation and Their
  Applications\/}, {\bf 80}, 81--93.

\bibitem[Boudin {\em et~al.}(2010)Boudin, G\"otz, \& Grec]{Boudin2010}
{\sc Boudin, L., G\"otz, D. \& Grec, B.} (2010)
\newblock Diffusion models of multicomponent mixtures in the lung.
\newblock {\em ESAIM: Proceedings.}, {\bf 30}, 90--103.

\bibitem[Boudin {\em et~al.}(2012)Boudin, Grec, \& Salvarani]{2012Boudin}
{\sc Boudin, L., Grec, B. \& Salvarani, F.} (2012)
\newblock {A mathematical and numerical analysis of the Maxwell-Stefan
  diffusion equations}.
\newblock {\em Discrete and Continuous Dynamical Systems - Series B\/}, {\bf
  17}, 1427--1440.

\bibitem[Bousquet {\em et~al.}(2018)Bousquet, Hu, Metti, \& Xu]{Jinchao2018}
{\sc Bousquet, A., Hu, X., Metti, M. \& Xu, J.} (2018)
\newblock Newton solvers for drift-diffusion and electrokinetic equations.
\newblock {\em SIAM Journal on Scientific Computing\/}, {\bf 40}, B982--B1006.

\bibitem[Carty \& Schrodt(1975)Carty \& Schrodt]{1975Carty}
{\sc Carty, R. \& Schrodt, T.} (1975)
\newblock Concentration profiles in ternary gaseous diffusion.
\newblock {\em Industrial \& Engineering Chemistry Fundamentals\/}, {\bf 14},
  276--278.

\bibitem[Chang {\em et~al.}(1975)Chang, Tai, \& Farhi]{CHANG1975109}
{\sc Chang, H.-K., Tai, R.~C. \& Farhi, L.~E.} (1975)
\newblock Some implications of ternary diffusion in the lung.
\newblock {\em Respiration Physiology\/}, {\bf 23}, 109--120.

\bibitem[Ciarlet {\em et~al.}(2003)Ciarlet, Huang, \& Zou]{GeneralSaddlepoint}
{\sc Ciarlet, P., Huang, J. \& Zou, J.} (2003)
\newblock {Some observations on generalized saddle-point problems}.
\newblock {\em SIAM Journal on Matrix Analysis and Applications\/}, {\bf 25},
  224--236.

\bibitem[Dalcin {\em et~al.}(2011)Dalcin, Paz, Kler, \& Cosimo]{Dalcin2011}
{\sc Dalcin, L.~D., Paz, R.~R., Kler, P.~A. \& Cosimo, A.} (2011)
\newblock Parallel distributed computing using {P}ython.
\newblock {\em Advances in Water Resources\/}, {\bf 34}, 1124--1139.

\bibitem[Darken(1948)Darken]{Darken48}
{\sc Darken, L.~S.} (1948)
\newblock Diffusion, mobility and their interrelation through free energy in
  binary metallic systems.
\newblock {\em Transactions of the AIME\/}, {\bf 175}, 184--201.

\bibitem[Datta \& Vilekar(2010)Datta \& Vilekar]{DATTA20105976}
{\sc Datta, R. \& Vilekar, S.~A.} (2010)
\newblock The continuum mechanical theory of multicomponent diffusion in fluid
  mixtures.
\newblock {\em Chemical Engineering Science\/}, {\bf 65}, 5976--5989.

\bibitem[de~Groot \& Mazur(1962)de~Groot \& Mazur]{deGrootMazur}
{\sc de~Groot, S.~R. \& Mazur, P.} (1962)
\newblock {\em Non-Equilibrium Thermodynamics\/}.
\newblock Amsterdam: North-Holland.

\bibitem[Duncan \& Toor(1962)Duncan \& Toor]{Duncan1969}
{\sc Duncan, J.~B. \& Toor, H.~L.} (1962)
\newblock An experimental study of three component gas diffusion.
\newblock {\em AIChE Journal\/}, {\bf 8}, 38--41.

\bibitem[Evans(2010)Evans]{evans2010partial}
{\sc Evans, L.} (2010)
\newblock {\em Partial Differential Equations\/}.
\newblock Graduate studies in mathematics.
\newblock Providence, R.I.: American Mathematical Society.

\bibitem[Fick(1855)Fick]{Fick}
{\sc Fick, A.} (1855)
\newblock {\"Uber Diffusion}.
\newblock {\em {Annalen der Physik}\/}, {\bf 170}, 59--86.

\bibitem[Fortin \& Glowinski(1983)Fortin \& Glowinski]{Fortin1983}
{\sc Fortin, M. \& Glowinski, R.} (1983)
\newblock {\em {Augmented Lagrangian Methods: Application to the Solution of
  Boundary-Value Problems}\/}. Studies in Mathematics and its Applications,
  vol.~15.
\newblock Amsterdam-New York, North-Holland: Elsevier.

\bibitem[Geuzaine \& Remacle(2009)Geuzaine \& Remacle]{GMSH}
{\sc Geuzaine, C. \& Remacle, J.~F.} (2009)
\newblock {Gmsh: A 3-D finite element mesh generator with built-in pre- and
  post-processing facilities}.
\newblock {\em International Journal for Numerical Methods in Engineering\/},
  {\bf 79}, 1309--1331.

\bibitem[Goyal \& Monroe(2017)Goyal \& Monroe]{Goyal01012017}
{\sc Goyal, P. \& Monroe, C.~W.} (2017)
\newblock {New foundations of Newman's theory for solid electrolytes:
  thermodynamics and transient balances}.
\newblock {\em Journal of The Electrochemical Society\/}, {\bf 164},
  {E3647--E3660}.

\bibitem[Helfand(1960)Helfand]{Helfand1960}
{\sc Helfand, E.} (1960)
\newblock On inversion of the linear laws of irreversible thermodynamics.
\newblock {\em The Journal of Chemical Physics\/}, {\bf 33}, 319--322.

\bibitem[Hendrickson \& Leland(1995)Hendrickson \& Leland]{Chaco95}
{\sc Hendrickson, B. \& Leland, R.} (1995)
\newblock A multilevel algorithm for partitioning graphs.
\newblock {\em Supercomputing '95: Proceedings of the 1995 ACM/IEEE Conference
  on Supercomputing\/}.
\newblock New York, NY, USA: Association for Computing Machinery, pp. 28--es.

\bibitem[Herz {\em et~al.}(2012)Herz, Ray, \& Knabner]{EUDNPPequation}
{\sc Herz, M., Ray, N. \& Knabner, P.} (2012)
\newblock {Existence and uniqueness of a global weak solution of a
  Darcy-Nernst-Planck-Poisson system}.
\newblock {\em GAMM-Mitteilungen\/}, {\bf 35}, 191--208.

\bibitem[Hirschfelder {\em et~al.}(1954)Hirschfelder, Curtiss, \&
  Bird]{hirschfelder1954molecular}
{\sc Hirschfelder, J., Curtiss, C. \& Bird, R.} (1954)
\newblock {\em The Molecular Theory of Gases and Liquids\/}.
\newblock New York: John Wiley \& Sons.

\bibitem[J{\"u}ngel \& Leingang(2019)J{\"u}ngel \& Leingang]{Jungel2019}
{\sc J{\"u}ngel, A. \& Leingang, O.} (2019)
\newblock {Convergence of an implicit Euler Galerkin scheme for
  Poisson-Maxwell-Stefan systems}.
\newblock {\em Advances in Computational Mathematics\/}, {\bf 45}, 1469--1498.

\bibitem[Jüngel \& Stelzer(2012)Jüngel \& Stelzer]{Jungel2012}
{\sc Jüngel, A. \& Stelzer, I.} (2012)
\newblock {Existence analysis of Maxwell-Stefan systems for multicomponent
  mixtures}.
\newblock {\em SIAM Journal on Mathematical Analysis\/}, {\bf 45}, 2421--2440.

\bibitem[Kolesnikov \& Tirskii(1984)Kolesnikov \&
  Tirskii]{Maxwell--Stefan-plasma}
{\sc Kolesnikov, A.~F. \& Tirskii, G.~A.} (1984)
\newblock {The Stefan-Maxwell equations for diffusion fluxes of plasma in a
  magnetic field}.
\newblock {\em Fluid Dynamics\/}, {\bf 19}, 643--649.

\bibitem[Kraaijeveld \& Wesselingh(1993)Kraaijeveld \& Wesselingh]{Gerrit1993}
{\sc Kraaijeveld, G. \& Wesselingh, J.~A.} (1993)
\newblock Negative {Maxwell--Stefan} diffusion coefficients.
\newblock {\em Industrial \& Engineering Chemistry Research\/}, {\bf 32},
  738--742.

\bibitem[Krishna \& Wesselingh(1997)Krishna \& Wesselingh]{KRISHNA1997861}
{\sc Krishna, R. \& Wesselingh, J.} (1997)
\newblock {The Maxwell-Stefan approach to mass transfer}.
\newblock {\em Chemical Engineering Science\/}, {\bf 52}, 861--911.

\bibitem[Lightfoot {\em et~al.}(1962)Lightfoot, Cussler, \&
  Rettig]{Lightfoot1962}
{\sc Lightfoot, E.~N., Cussler, E.~L. \& Rettig, R.~L.} (1962)
\newblock {Applicability of the Stefan--Maxwell equations to multicomponent
  diffusion in liquids}.
\newblock {\em AIChE Journal\/}, {\bf 8}, 708--710.

\bibitem[Liu {\em et~al.}(2015)Liu, Metti, \& Xu]{Jinchao2015}
{\sc Liu, C., Metti, M. \& Xu, J.} (2015)
\newblock Energetically stable discretizations for charge carrier transport and
  electrokinetic models.
\newblock {\em Journal of Computational Physics\/}, {\bf 306}, 1--18.

\bibitem[Liu \& Monroe(2014)Liu \& Monroe]{LIU2014447}
{\sc Liu, J. \& Monroe, C.~W.} (2014)
\newblock Solute-volume effects in electrolyte transport.
\newblock {\em Electrochimica Acta\/}, {\bf 135}, 447--460.

\bibitem[Marchandise {\em et~al.}(2011)Marchandise, Carton~de Wiart, Vos,
  Geuzaine, \& Remacle]{Lungmeshsurface2}
{\sc Marchandise, E., Carton~de Wiart, C., Vos, W.~G., Geuzaine, C. \& Remacle,
  J.} (2011)
\newblock {High‐quality surface remeshing using harmonic maps—Part II:
  Surfaces with high genus and of large aspect ratio}.
\newblock {\em International Journal for Numerical Methods in Engineering\/},
  {\bf 86}, 1303--1321.

\bibitem[Maxwell(1867)Maxwell]{Maxwell1867}
{\sc Maxwell, J.~C.} (1867)
\newblock I{V}. {On} the dynamical theory of gases.
\newblock {\em Philosophical Transactions of the Royal Society of London\/},
  {\bf 157}, 49--88.

\bibitem[McLeod \& Bourgault(2014)McLeod \& Bourgault]{McLeod2014}
{\sc McLeod, M. \& Bourgault, Y.} (2014)
\newblock {Mixed finite element methods for addressing multi-species diffusion
  using the Maxwell–Stefan equations}.
\newblock {\em Computer Methods in Applied Mechanics and Engineering\/}, {\bf
  279}, 515--535.

\bibitem[Monroe {\em et~al.}(2015)Monroe, Wheeler, \& Newman]{Monroe2015}
{\sc Monroe, C.~W., Wheeler, D.~R. \& Newman, J.} (2015)
\newblock {Nonequilibrium linear response theory: application to
  Onsager–Stefan–Maxwell diffusion}.
\newblock {\em Industrial \& Engineering Chemistry Research\/}, {\bf 54},
  4460--4467.

\bibitem[Monroe \& Delacourt(2013)Monroe \& Delacourt]{Monroe2013}
{\sc Monroe, C.~W. \& Delacourt, C.} (2013)
\newblock {Continuum transport laws for locally non-neutral electrolytes}.
\newblock {\em Electrochimica Acta\/}, {\bf 114}, 649--657.

\bibitem[Monroe \& Newman(2009)Monroe \& Newman]{Monroe2009}
{\sc Monroe, C.~W. \& Newman, J.} (2009)
\newblock Onsager's shortcut to proper forces and fluxes.
\newblock {\em Chemical Engineering Science\/}, {\bf 64}, 4804--4809.

\bibitem[Nernst(1888)Nernst]{Nernst1888}
{\sc Nernst, W.} (1888)
\newblock {Zur Kinetik der in Lösung befindlichen Körper}.
\newblock {\em Zeitschrift für Physikalische Chemie\/}, {\bf 2U}, 613--637.

\bibitem[Newman {\em et~al.}(1965)Newman, Bennion, \& Tobias]{Newman1965}
{\sc Newman, J., Bennion, D. \& Tobias, C.~W.} (1965)
\newblock {Mass transfer in concentrated binary electrolytes}.
\newblock {\em Berichte der Bunsengesellschaft f\"ur physikalische Chemie\/},
  {\bf 69}, 608--612.

\bibitem[Newman \& Thomas-Alyea(2012)Newman \&
  Thomas-Alyea]{newman2012electrochemical}
{\sc Newman, J. \& Thomas-Alyea, K.} (2012)
\newblock {\em {Electrochemical Systems}\/}.
\newblock Hoboken, New Jersey: John Wiley \& Sons.

\bibitem[Nicolaides(1982)Nicolaides]{Generalisedsaddleerror1982}
{\sc Nicolaides, R.~A.} (1982)
\newblock {Existence, uniqueness and approximation for generalized saddle point
  problems}.
\newblock {\em SIAM Journal on Numerical Analysis\/}, {\bf 19}, 349--357.

\bibitem[Onsager(1931a)Onsager]{Onsager1931first}
{\sc Onsager, L.} (1931a)
\newblock Reciprocal relations in irreversible processes. {I}.
\newblock {\em Physical Review\/}, {\bf 37}, 405--426.

\bibitem[Onsager(1931b)Onsager]{Onsager1931second}
{\sc Onsager, L.} (1931b)
\newblock Reciprocal relations in irreversible processes. {II}.
\newblock {\em Physical Review\/}, {\bf 38}, 2265--2279.

\bibitem[Onsager(1945)Onsager]{Onsager1945}
{\sc Onsager, L.} (1945)
\newblock Theories and problems of liquid diffusion.
\newblock {\em Annals of the New York Academy of Sciences\/}, {\bf 46},
  241--265.

\bibitem[Planck(1890)Planck]{Planck1890}
{\sc Planck, M.} (1890)
\newblock {\"Uber die Potentialdifferenz zwischen zwei verdünnten Lösungen
  binärer Electrolyte}.
\newblock {\em Annalen der Physik\/}, {\bf 276}, 561--576.

\bibitem[Rathgeber {\em et~al.}(2016)Rathgeber, Ham, Mitchell, Lange, Luporini,
  McRae, Bercea, Markall, \& Kelly]{Rathgeber2016}
{\sc Rathgeber, F., Ham, D.~A., Mitchell, L., Lange, M., Luporini, F., McRae,
  A. T.~T., Bercea, G.-T., Markall, G.~R. \& Kelly, P. H.~J.} (2016)
\newblock Firedrake: automating the finite element method by composing
  abstractions.
\newblock {\em ACM Transactions on Mathematical Software\/}, {\bf 43},
  24:1--24:27.

\bibitem[Remacle {\em et~al.}(2010)Remacle, Geuzaine, Compère, \&
  Marchandise]{Lungmeshsurface1}
{\sc Remacle, J., Geuzaine, C., Compère, G. \& Marchandise, E.} (2010)
\newblock High-quality surface remeshing using harmonic maps.
\newblock {\em International Journal for Numerical Methods in Engineering\/},
  {\bf 83}, 403--425.

\bibitem[Robertson \& Zydney(1988)Robertson \& Zydney]{Bruce1988}
{\sc Robertson, B.~C. \& Zydney, A.~L.} (1988)
\newblock {A Stefan-Maxwell analysis of protein transport in porous membranes}.
\newblock {\em Separation Science and Technology\/}, {\bf 23}, 1799--1811.

\bibitem[Schmidt \& Singh(2010)Schmidt \& Singh]{Meshmixer}
{\sc Schmidt, R. \& Singh, K.} (2010)
\newblock Meshmixer: an interface for rapid mesh composition.
\newblock {\em ACM SIGGRAPH 2010 Talks\/}.
\newblock Association for Computing Machinery.

\bibitem[Schmuck(2009)Schmuck]{2009Schmuck}
{\sc Schmuck, M.} (2009)
\newblock {Analysis of the Navier–Stokes–Nernst–Planck–Poisson System}.
\newblock {\em Mathematical Models and Methods in Applied Sciences\/}, {\bf
  19}.

\bibitem[Standart {\em et~al.}(1979)Standart, Taylor, \& Krishna]{Krishna1979}
{\sc Standart, G.~L., Taylor, R. \& Krishna, R.} (1979)
\newblock {The Maxwell-Stefan formulation of irreversible thermodynamics for
  simultaneous heat and mass transfer}.
\newblock {\em Chemical Engineering Communications\/}, {\bf 3}, 277--289.

\bibitem[Stefan(1871)Stefan]{Stefan1871}
{\sc Stefan, J.} (1871)
\newblock {{\"U}ber das Gleichgewicht und die Bewegung, insbesondere die
  Diffusion von Gasgemengen}.
\newblock {\em Sitzungsberichte der Mathematisch-Naturwissenschaftlichen Classe
  der Kaiserlichen Akademie der Wissenschaften Wien, 2te Abteilung\/}, {\bf
  63}, 63--124.

\bibitem[Villaluenga {\em et~al.}(2018)Villaluenga, Pesko, Timachova, Feng,
  Newman, Srinivasan, \& Balsara]{NegativeMScoefficients}
{\sc Villaluenga, I., Pesko, D.~M., Timachova, K., Feng, Z., Newman, J.,
  Srinivasan, V. \& Balsara, N.~P.} (2018)
\newblock Negative {Stefan--Maxwell} diffusion coefficients and complete
  electrochemical transport characterization of homopolymer and block copolymer
  electrolytes.
\newblock {\em Journal of The Electrochemical Society\/}, {\bf 165},
  A2766--A2773.

\end{thebibliography}

\end{document}